\theoremstyle{definition}
\numberwithin{equation}{section}
\numberwithin{equation}{section}
\newtheorem{theorem}{Theorem}[section]
\newtheorem{lemma}[theorem]{Lemma}
\newtheorem{proposition}[theorem]{Proposition}
\theoremstyle{definition}
\newtheorem{definition}[theorem]{Definition}
\newtheorem{remark}[theorem]{Remark}
\newcommand{\HOM}{\text{HOM}}
\newcommand{\B}{\mathbb{B}}
\newcommand{\C}{\mathbb{C}}
\newcommand{\XX}{\mathbf{X}}
\newcommand{\YY}{\mathbf{Y}}
\newcommand{\HH}{\mathbf{H}}
\newcommand{\HHH}{\mathscr{H}}
\newcommand{\Q}{\mathbb{Q}}
\newcommand{\Z}{\mathbb{Z}}
\newcommand{\N}{\mathbb{N}}
\newcommand{\ii}{\textit{\textbf{i}}}
\newcommand{\jj}{\textit{\textbf{j}}}
\newcommand{\kk}{\textit{\textbf{k}}}
\newcommand{\Seq}{\text{Seq}}
\newcommand{\Pol}{\mathscr{P}}
\newcommand{\Ind}{\text{Ind}}
\newcommand{\Res}{\text{Res}}
\newcommand{\gdim}{\text{dim}^\pi_q}
\newcommand{\Mod}{\text{Mod}}
\newcommand{\pMod}{\text{pMod}}
\newcommand{\fMod}{\text{fMod}}
\newcommand{\re}{\text{re}}
\newcommand{\im}{\text{im}}
\newcommand{\od}{\overline{1}}
\newcommand{\ev}{\overline{0}}
\newcommand{\tx}{\text}
\newcommand{\genO}[3] 
{\fontsize{10}{10}\selectfont
\xy
(0,5.5)*{}; (0,-5.5)*{} **\dir{-};
(4,0)*{\cdots};
(8,5.5)*{}; (8,-5.5)*{} **\dir{-};
(12,0)*{\cdots};
(16,5.5)*{}; (16,-5.5)*{} **\dir{-};
(0,-8)*{#1}; (8,-8)*{#2}; (16,-8)*{#3};
\endxy\fontsize{11}{11}\selectfont}
\newcommand{\genX}[3] 
{\fontsize{10}{10}\selectfont
\xy
(0,5.5)*{}; (0,-5.5)*{} **\dir{-};
(4,0)*{\cdots};
(8,5.5)*{}; (8,-5.5)*{} **\dir{-};
(8,0)*{\bullet}; (12,0)*{\cdots};
(16,5.5)*{}; (16,-5.5)*{} **\dir{-};
(0,-8)*{#1}; (8,-8)*{#2}; (16,-8)*{#3};
\endxy\fontsize{11}{11}\selectfont}
\newcommand{\genT}[4] 
{\fontsize{10}{10}\selectfont
\xy
(0,5.5)*{}; (0,-5.5)*{} **\dir{-};
(3.5,0)*{\cdots};
(5.5,5.5)*{}; (12.5,-5.5)*{} **\dir{-};
(12.5,5.5)*{}; (5.5,-5.5)*{} **\dir{-};
(14.5,0)*{\cdots};
(18,5.5)*{}; (18,-5.5)*{} **\dir{-};
(0,-8)*{#1}; (5.5,-8)*{#2}; (12.5,-8)*{#3}; (18,-8)*{#4};\endxy
\fontsize{11}{11}\selectfont}
\newcommand{\SSL}[2]
{\fontsize{10}{10}\selectfont
\xy
(0,5)*{}; (0,-5)*{} **\dir{-}\POS?(.25)="x";
(3.5,0)*{\cdots};
(7,5)*{}; (7,-5)*{} **\dir{-}\POS?(.75)="y";
"x"*{\bullet};"y"*{\bullet};(0,-7)*{#1}; (7,-7)*{#2};
\endxy\fontsize{11}{11}\selectfont}
\newcommand{\SSR}[2]
{\fontsize{10}{10}\selectfont
\xy
(0,5)*{}; (0,-5)*{} **\dir{-}\POS?(.75)="x";
(3.5,0)*{\cdots};
(7,5)*{}; (7,-5)*{} **\dir{-}\POS?(.25)="y";
"x"*{\bullet};"y"*{\bullet};(0,-7)*{#1}; (7,-7)*{#2};
\endxy\fontsize{11}{11}\selectfont}
\newcommand{\SCL}[3]
{\fontsize{10}{10}\selectfont
\xy
(0,5)*{}; (0,-5)*{} **\dir{-}\POS?(.25)="x";
(3.5,0)*{\cdots};
(7,5)*{}; (7,0)*{} **\dir{-};
(11,5)*{}; (11,0)*{} **\dir{-};
(7,0)*{}; (11,-5)*{} **\dir{-};
(11,0)*{}; (7,-5)*{} **\dir{-};
"x"*{\bullet};(0,-7)*{#1}; (7,-7)*{#2};(11,-7)*{#3};
\endxy\fontsize{11}{11}\selectfont}
\newcommand{\SCR}[3]
{\fontsize{10}{10}\selectfont
\xy
(0,5)*{}; (0,-5)*{} **\dir{-}\POS?(.75)="x";
(3.5,0)*{\cdots};
(7,5)*{}; (11,0)*{} **\dir{-};
(11,5)*{}; (7,0)*{} **\dir{-};
(7,0)*{}; (7,-5)*{} **\dir{-};
(11,0)*{}; (11,-5)*{} **\dir{-};
"x"*{\bullet};(0,-7)*{#1}; (7,-7)*{#2};(11,-7)*{#3};
\endxy\fontsize{11}{11}\selectfont}
\newcommand{\SCLL}[3]
{\fontsize{10}{10}\selectfont
\xy
(-4,5)*{}; (0,0)*{} **\dir{-};
(0,5)*{}; (-4,0)*{} **\dir{-};
(-4,0)*{}; (-4,-5)*{} **\dir{-};
(0,0)*{}; (0,-5)*{} **\dir{-};
(3.5,0)*{\cdots};
(7,5)*{}; (7,-5)*{} **\dir{-}\POS?(.75)="x";
"x"*{\bullet};(-4,-7)*{#1};(0,-7)*{#2}; (7,-7)*{#3};
\endxy\fontsize{11}{11}\selectfont}
\newcommand{\SCRR}[3]
{\fontsize{10}{10}\selectfont
\xy
(-4,-5)*{}; (0,0)*{} **\dir{-};
(0,-5)*{}; (-4,0)*{} **\dir{-};
(-4,0)*{}; (-4,5)*{} **\dir{-};
(0,0)*{}; (0,5)*{} **\dir{-};
(3.5,0)*{\cdots};
(7,5)*{}; (7,-5)*{} **\dir{-}\POS?(.25)="x";
"x"*{\bullet};(-4,-7)*{#1};(0,-7)*{#2}; (7,-7)*{#3};
\endxy\fontsize{11}{11}\selectfont}
\newcommand{\CCL}[4]
{\fontsize{10}{10}\selectfont
\xy
(-4,5)*{}; (0,0)*{} **\dir{-};
(0,5)*{}; (-4,0)*{} **\dir{-};
(-4,0)*{}; (-4,-5)*{} **\dir{-};
(0,0)*{}; (0,-5)*{} **\dir{-};
(3.5,0)*{\cdots};
(7,5)*{}; (7,0)*{} **\dir{-};
(11,5)*{}; (11,0)*{} **\dir{-};
(7,0)*{}; (11,-5)*{} **\dir{-};
(11,0)*{}; (7,-5)*{} **\dir{-};
(-4,-7)*{#1};(0,-7)*{#2}; (7,-7)*{#3};(11,-7)*{#4};
\endxy\fontsize{11}{11}\selectfont}
\newcommand{\CCR}[4]
{\fontsize{10}{10}\selectfont
\xy
(-4,5)*{}; (-4,0)*{} **\dir{-};
(0,5)*{}; (0,0)*{} **\dir{-};
(-4,0)*{}; (0,-5)*{} **\dir{-};
(0,0)*{}; (-4,-5)*{} **\dir{-};
(3.5,0)*{\cdots};
(7,5)*{}; (11,0)*{} **\dir{-};
(11,5)*{}; (7,0)*{} **\dir{-};
(7,0)*{}; (7,-5)*{} **\dir{-};
(11,0)*{}; (11,-5)*{} **\dir{-};
(-4,-7)*{#1};(0,-7)*{#2}; (7,-7)*{#3};(11,-7)*{#4};
\endxy\fontsize{11}{11}\selectfont}
\newcommand{\DC}[2]
{\fontsize{10}{10}\selectfont
\xy
(0,6)*{}="T1"; (6,6)*{}="T2";
(0,-6)*{}="B1"; (6,-6)*{}="B2";
"T1"; "B1" **\crv{(9, 0)};
"T2"; "B2" **\crv{(-3,0)};
(0,-8)*{#1}; (6,-8)*{#2};
\endxy\fontsize{11}{11}\selectfont}
\newcommand{\DCL}[2]
{ \fontsize{10}{10}\selectfont
\xy
(0,4)*{}; (0,-4)*{} **\dir{-};
(5,4)*{}; (5,-4)*{} **\dir{-};
(0,0)*{\bullet}; (0,-6)*{#1}; (5,-6)*{#2};
\endxy\fontsize{11}{11}\selectfont}
\newcommand{\DCR}[2]
{\fontsize{10}{10}\selectfont
\xy
(0,4)*{}; (0,-4)*{} **\dir{-};
(5,4)*{}; (5,-4)*{} **\dir{-};
(5,0)*{\bullet}; (0,-6)*{#1}; (5,-6)*{#2};
\endxy\fontsize{11}{11}\selectfont}
\newcommand{\AS}[2]
{\fontsize{10}{10}\selectfont
\xy
(0,5)*{}; (0,-5)*{} **\dir{-};
(6,5)*{}; (6,-5)*{} **\dir{-};
(0,-7)*{#1}; (6,-7)*{#2};
\endxy\fontsize{11}{11}\selectfont}
\newcommand{\LU}[3]
{\fontsize{10}{10}\selectfont
\xy
(0,4.5)*{}; (7,-4.5)*{} **\dir{-} \POS?(.25)="x";
(7,4.5)*{}; (0,-4.5)*{} **\dir{-};
"x"*{\bullet}; "x"+(-2,2)*{#1}; (0,-6.5)*{#2}; (7,-6.5)*{#3}; \endxy\fontsize{11}{11}\selectfont}
\newcommand{\RD}[3]
{\fontsize{10}{10}\selectfont
\xy
(0,4.5)*{}; (7,-4.5)*{} **\dir{-} \POS?(.75)="x";
(7,4.5)*{}; (0,-4.5)*{} **\dir{-};
"x"*{\bullet}; "x"+(2,2)*{#1}; (0,-6.5)*{#2}; (7,-6.5)*{#3}; \endxy\fontsize{11}{11}\selectfont}
\newcommand{\RU}[3]
{\fontsize{10}{10}\selectfont
\xy
(0,4.5)*{}; (7,-4.5)*{} **\dir{-};
(7,4.5)*{}; (0,-4.5)*{} **\dir{-} \POS?(.25)="x";
"x"*{\bullet}; "x"+(2,2)*{#1}; (0,-6.5)*{#2}; (7,-6.5)*{#3}; \endxy\fontsize{11}{11}\selectfont}
\newcommand{\LD}[3]
{\fontsize{10}{10}\selectfont
\xy
(0,4.5)*{}; (7,-4.5)*{} **\dir{-};
(7,4.5)*{}; (0,-4.5)*{} **\dir{-} \POS?(.75)="x";
"x"*{\bullet}; "x"+(-2,2)*{#1}; (0,-6.5)*{#2}; (7,-6.5)*{#3}; \endxy\fontsize{11}{11}\selectfont}
\newcommand{\BraidL}[3]
{\fontsize{10}{10}\selectfont
\xy
(0,6)*{}="T1"; (5,6)*{}="T2";  (10,6)*{}="T3";
(0,-6)*{}="B1"; (5,-6)*{}="B2"; (10,-6)*{}="B3";
"T1"; "B3" **\crv{(0,0)&(10,0)}; "T3"; "B1" **\crv{(10,0)&(0,0)};
"T2"; "B2" **\crv{(5,5)&(3,4)&(0,1.5)&(0,-1.5)&(3,-4)&(5,-5)};
(0,-8)*{#1}; (5,-8)*{#2};(10,-8)*{#3};
\endxy\fontsize{11}{11}\selectfont}
\newcommand{\BraidR}[3]
{\fontsize{10}{10}\selectfont
\xy
(0,6)*{}="T1"; (5,6)*{}="T2";  (10,6)*{}="T3";
(0,-6)*{}="B1"; (5,-6)*{}="B2"; (10,-6)*{}="B3";
"T1"; "B3" **\crv{(0,0)&(10,0)}; "T3"; "B1" **\crv{(10,0)&(0,0)};
"T2"; "B2" **\crv{(5,5)&(7,4)&(10,1.5)&(10,-1.5)&(7,-4)&(5,-5)};
(0,-8)*{#1}; (5,-8)*{#2};(10,-8)*{#3};
\endxy\fontsize{11}{11}\selectfont}
\newcommand{\threeA}[3]
{ \fontsize{9}{9}\selectfont
\xy
(0,4)*{}; (0,-4)*{} **\dir{-};
(4,4)*{}; (4,-4)*{} **\dir{-};
(8,4)*{}; (8,-4)*{} **\dir{-};
(0,0)*{\bullet}; (0,-6)*{#1}; (4,-6)*{#2};(8,-6)*{#3};
\endxy\fontsize{11}{11}\selectfont}
\newcommand{\threeB}[3]
{ \fontsize{9}{9}\selectfont
\xy
(0,4)*{}; (0,-4)*{} **\dir{-};
(4,4)*{}; (4,-4)*{} **\dir{-};
(8,4)*{}; (8,-4)*{} **\dir{-};
(4,0)*{\bullet};(0,-6)*{#1}; (4,-6)*{#2};(8,-6)*{#3};
\endxy\fontsize{11}{11}\selectfont}
\newcommand{\threeC}[3]
{ \fontsize{9}{9}\selectfont
\xy
(0,4)*{}; (0,-4)*{} **\dir{-};
(4,4)*{}; (4,-4)*{} **\dir{-};
(8,4)*{}; (8,-4)*{} **\dir{-};
(8,0)*{\bullet};(0,-6)*{#1}; (4,-6)*{#2};(8,-6)*{#3};
\endxy\fontsize{11}{11}\selectfont}
\title[]
{Quiver Hecke algebras for Borcherds-Cartan datum III:
Categorification of quantum Borcherds superalgebras}
\author[Wan Wu]{Wan Wu}
\address{Harbin Engineering University,
Harbin, China}
\email{wuwan1818@163.com}
\keywords{categorification, quiver Hecke superalgebras, quantum Borcherds superalgebras}
\subjclass[2020] {17B37, 17B67, 16G20, 20C08}
\begin{document}
\maketitle
\begin{abstract}
We introduce a family of the quiver Hecke superalgebras which give a categorification of quantum Borcherds superalgebras.
\end{abstract}

\let\cleardoublepage\clearpage\tableofcontents

\section*{\textbf{Introduction}}

\vskip 2mm

The quiver Hecke superalgebra is the superalgebraic counterpart of the Khovanov-Lauda-Rouquier (KLR) algebra (also called the quiver Hecke algebra) \cite{KL2009,KL2011,Rou}. First introduced by Kang, Kashiwara, and Tsuchioka in \cite{KKT2016}, it was explicitly constructed to rigorously link affine Hecke-Clifford superalgebras and affine Sergeev superalgebras. This extension has further furnished a profound theoretical framework for studying the modular representation theory of spin symmetric groups \cite{KL2022,K2025}.

For the one-vertex quiver, the quiver Hecke superalgebra reduces to the odd nil-Hecke algebra $ONH_n$—the odd analogue of the nil-Hecke algebra, first introduced by Ellis, Khovanov and Lauda in \cite{EKL2014}.
Concretely, this algebra can be realized as a matrix algebra over $O\Lambda_n$, the odd symmetric polynomial algebra. 
Both $O\Lambda_n$ and its direct limit $O\Lambda$ (the ring of odd symmetric functions) are fundamental, as they are the odd analogues of symmetric functions. 
Paralleling the indispensable role of symmetric functions in geometry and combinatorics, these odd symmetric algebraic objects also hold essential applications in the same fields \cite{E2013,EK2012,LR2014}.

In contrast to the well-established result that KLR algebras categorify the (half) quantum group, Hill and Wang \cite{HW2015} exploited the quiver Hecke superalgebra to categorify a covering algebra of both the half quantum group and the half quantum superalgebra. This covering algebra is defined as a  $\Q(q)^\pi$-algebra,  which specializes to a quantum Kac-Moody algebra when $\pi=1$ and to a quantum Kac-Moody superalgebra when $\pi=-1$. 
The most notable innovation lies in the identification of the parameter $\pi$ with the parity shift action on superalgebras and supermodules during the categorification procedure. 
Specifically, they proved that the Grothendieck group of the category of projective graded supermodules over the quiver Hecke superalgebra is isomorphic to the aforementioned covering algebra as  $\Q(q)^\pi$-algebras. 
In particular, the specialization $\pi=-1$ yields a categorification of quantum Kac-Moody superalgebras.

 A parallel result was established by Kang, Kashiwara, and Oh \cite{KKO2013,KKO2014}, who employed this algebraic framework to furnish a supercategorification of quantum groups and further to provide a categorification of irreducible highest-weight modules. In both cases, their approach hinges on utilizing the cyclotomic quiver Hecke superalgebras to categorify irreducible highest-weight modules, following the theoretical framework established in \cite{KK2012}.
 
Hill and Wang \cite{HW2015} proposed that their methodological framework is applicable to the quantum Borcherds algebra setting. This implies the potential construction of a quiver Hecke superalgebra associated with a Borcherds-Cartan datum, which would categorify both the covering algebra of the quantum Borcherds algebra (as defined in Section 1.3) and the quantum Borcherds superalgebra (as defined in \cite{BKM1998}).

Notably, the construction of such a quiver Hecke superalgebra depends on the prior establishment of the corresponding quiver Hecke algebra, while the categorification of the aforementioned covering algebra relies on the categorification of the conventional quantum group. In previous work, Tong and Wu \cite{TW2023, TW2025} achieved the categorification of quantum Borcherds algebras (also called quantum generalized Kac–Moody algebras) associated with arbitrary Borcherds-Cartan data—this breakthrough facilitates the construction of the superalgebraic counterpart (i.e., the quiver Hecke superalgebra) and thereby enables the categorification of both the quantum Borcherds superalgebra and its covering algebra.


In this paper, we extend the construction introduced in \cite{TW2023} to the context of Borcherds Cartan superdata, and apply the methodology established in \cite{HW2015} to categorify the covering algebras of quantum Borcherds algebras and quantum Borcherds superalgebras. Given an arbitrary Borcherds–Cartan superdatum, consisting of a $\Z_2$-graded indexed $I=I_{\ev}\sqcup I_{\od}$ and a  matrix $\widetilde A$ parametrized by an $I$ with diagonal can be $\leq 0$, we define the associated covering quantum Borcherds algebra $\HH$, which is a $\Q(q)^\pi$ twisted bialgebra. The specializations of $\pi=1$ and $\pi=-1$ of $\HH$ coincide with the negative parts of the quantum Borcherds algebra $\XX$ and the negative part of the quantum Borcherds superalgebra $\YY$, respectively. Furthermore, we establish that $\HH$ can be realized as the quotient algebra $\HHH/\text{rad}\{\ ,\ \}_\pi$, where $\text{rad}\{\ ,\ \}_\pi$ denotes the radical of the Lusztig form $\{\ ,\ \}_\pi$ on the free algebra $\HHH$.

We next construct the quiver Hecke superalgebras $R(\nu) (\nu\in\N[I])$ and derive their polynomial representations via braid-like planar diagrams, building on the foundational work of \cite{TW2023} and  \cite{TW2025}.
We further consider the degenerate case $\nu=ni$ for fixed $i\in I$, where only the subcase $i\in I^{\im}\cap I_{\od}$ necessitates novel analysis. For $i\in I^{\re}$, $R(ni)$ reduces to the (odd) nil-Hecke algebra, 
and the case $i\in I^{\im}\cap I_{\ev}$ has been addressed in \cite{TW2023}.
 
 Finally, we proceed to characterize the type $\mathtt M$ phenomenon of $R(\nu)$  via the Kashiwara operator construction in \cite{TW2025}, a cornerstone result for categorification and related topics. 
 Let $K_0(R(\nu))$ be the Grothendieck group of finitely generated projective graded $R(\nu)$-supermodules. 
 The aggregate Grothendieck group $K_0(R)=\bigoplus_{\nu\in \N[I]}K_0(R(\nu))$ carries a twisted bialgebra structure induced by module induction and restriction.
 We show a twisted $\Z^\pi[q,q^{-1}]$-bialgebra isomorphism between $K_0(R)$ and $_{\mathcal A^\pi}\HH$ (the $\mathcal A^\pi$ form of  $\HH$).
 Specializing $\pi$ to $-1$ thereby yields a categorification of the quantum Borcherds superalgebra as Theorem \ref{them:iso-}.



\vskip 6mm

\section{\textbf{Preliminaries}}

 Let $\pi$ be an indeterminate. For any ring $R$, define $R^\pi=R[\pi]/(\pi^2-1)$.

\subsection{Graded superalgebras}\

\vskip 2mm

A \emph{graded superalgebra}
 $A=\bigoplus_{n\in\Z,\varepsilon\in\Z_2}A_n^{\varepsilon}$ is a $(\Z,\Z_2)$-graded algebra over  $\C$. For a bi-homogeneous element $a\in A_n^{\varepsilon}$, we write $|a|=n$ for its degree and set

 $$p(a)=\begin{cases}0\ \ & \tx{if}\ \varepsilon=\ev, \\ 1 & \tx{if}\ \varepsilon=\od. \end{cases}$$
 Given two graded superalgebras $A$ and $B$, their tensor product $A\otimes B$ is a graded superalgebra with the multiplication given by
  $$(a\otimes b)(a'\otimes b')=(-1)^{p(b)p(a')}aa'\otimes bb'.$$

 Let $A,B$ be graded superalgebras. A \emph{graded $A$-$B$-bisupermodule} $M=\bigoplus_{n\in\Z, \varepsilon\in\Z_2}M_n^{\varepsilon}$ is a $(\Z,\Z_2)$-graded $A$-$B$-bimodule.
 For two such bisupermodules $M$ and $N$, set $$\HOM_{A\tx{-}B}(M,N)=\bigoplus_{n\in\Z, \varepsilon\in\Z_2}\HOM_{A\tx{-}B}(M,N)_n^\varepsilon,$$ where $\HOM_{A\tx{-}B}(M,N)_n^\varepsilon$ consists of all linear maps $f:M\rightarrow N$ of bidegree $(n,\varepsilon)$ such that
 $$f(amb)=(-1)^{p(f)p(a)}af(m)b \quad \tx{for}\ a\in A, b\in B\ \tx{and}\ m\in M.$$
 By omitting  $B$ or $A$ we obtain the notions of left or right graded supermodules and  their morphisms. Unless otherwise stated, all graded supermodules are assumed to be left modules.

 For a graded superspace $V$, i.e., a $(\Z,\Z_2)$-graded vector space, we denote by $|V|$ its underlying  $\Z$-graded vector space  (forgetting parity). For graded
 $A$-supermodules $M$ and $N$, we write $M\simeq N$ if there exists an isomorphism in $\HOM_{A}(M,N)_0^{\ev}$, and write $M\cong N$ if there exists an isomorphism in $\HOM_{A}(M,N)_0 $, that is, if $|M|$ and $|N|$ are isomorphic as  graded $|A|$-modules.

 Let $M$ be a  graded $A$-supermodule and let $q$ be an indeterminate.  If $M_n:=M_n^{\ev}\oplus M_n^{\od} =0$ for $n\ll 0$, we define
 $$\gdim M:=\sum_{n\in\Z}\left(\text{dim}M_n^{\ev}+\pi\text{dim}M_n^{\od}\right)q^n\in\Z^\pi((q)).$$
For $k\in \Z$, the \emph{degree shift} $M\{k\}$ is the graded supermodule obtained from $M$ by putting $(M\{k\})_n^{\varepsilon}=M_{n-k}^{\varepsilon}$.  The \emph{parity shift}  $\Pi M$ is obtained  by putting $(\Pi M)_n^{\varepsilon}=M_n^{\varepsilon+\od}$ and with a new action $a\circ m=(-1)^{p(a)}am$.  We have $$\HOM_A(\Pi^rM\{k\},N)\simeq \Pi^r\HOM_A(M,N)\{-k\}$$ as $(\Z,\Z_2)$-graded vector spaces.

For $f(q,\pi)=\sum_{n\in \Z}(a_n+\pi b_n)q^n\in\N^\pi[q,q^{-1}]$, define a graded supermodule $$f(q,\pi) M:=\bigoplus_{n\in\Z}\left((M\{n\})^{\oplus a_n}\oplus(\Pi M\{n\})^{\oplus b_n}\right),$$
so that $\gdim f(q,\pi)M =f(q,\pi)\cdot\gdim M$.

Given  graded $A$-supermodule $M$ and graded $B$-supermodule $N$, their tensor product  is a graded $A\otimes B$-supermodule with action
 $$a\otimes b\cdot(m\otimes n)=(-1)^{p(b)p(m)} am\otimes bn.$$
 If $f\in\HOM_{A}(M,M')$ and $g\in\HOM_{B}(N,N')$, then $f\otimes g\in\HOM_{A\otimes B}(M\otimes N, M'\otimes N')$ acts by
  $$f\otimes g(m\otimes n)=(-1)^{p(g)p(m)} f(m)\otimes g(n).$$

 An irreducible graded $A$-supermodule $V$ is said\ to be of \emph{type $\mathtt{M}$} if $|V|$ is an irreducible graded $|A|$-module; otherwise $V$ is of \emph{type $\mathtt{Q}$}.
A graded superalgebra $A=\bigoplus_{n\in\Z}A_n$  is \emph{Laurentian} if each $A_n$ is finite-dimensional and $A_n = 0$ for $n \ll 0$. 

\begin{lemma}\label{irr super}{\it  Assume $A$ is Laurentian.
\begin{itemize}
\item [(1)] Up to degree and parity shifts, there are only finitely many irreducible graded $A$-supermodules, all of which are finite-dimensional.
\item [(2)] Let $V$ be an  irreducible graded $A$-supermodule. Then $V$ is of type $\mathtt Q$ if and only if $\Pi V\simeq V$. Moreover, if $V$ is of type $\mathtt Q$, then $|V|\cong N\oplus N^\varsigma$ for some irreducible graded $|A|$-modules $N$  with $N\ncong N^\varsigma$. \\
    Here, $N^\varsigma$ is the graded $|A|$-module obtained from $N$ by twisting the action via $a\ast n=(-1)^{p(a)}an$. We refer to $N$ as $V^+$ and to $N^\varsigma$ as $V^-$.
\item [(3)] The graded Jacoboson radical $J^{gr}(|A|)$ of $|A|$ coincides with the graded super Jacoboson radical of $A$: $$J^{gr\tx{-}super}(A)=\bigcap\{\tx{ann}(V)\mid V \ \tx{irreducible graded} \ A\tx{-supermodule}\}.$$
\item [(4)] Suppose that, up to degree and parity shifts, $V_1,\dots, V_r$ of type $\mathtt M$ and $V_{r+1},\dots, V_t$ of type $\mathtt Q$ is a non-redundant set of irreducible graded $A$-supermodules, then
    \begin{itemize}
\item [(i)] up to degree  shifts, $V_1,\dots, V_r, \Pi V_1,\dots, \Pi V_{r}, V_{r+1},\dots, V_t$ is   a  non-redundant set of irreducible graded $A$-supermodules,
\item [(ii)] up to degree  shifts, $V_1,\dots, V_r,   V_{r+1}^\pm,\dots, V_t^\pm$ is  a non-redundant set of irreducible graded $|A|$-modules.
\end{itemize}
\end{itemize}
}\end{lemma}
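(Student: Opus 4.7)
The plan is to prove (1) via a graded Nakayama-style reduction to finite-dimensional $A_0$-modules, then derive (2) via a graded super Schur's lemma, with (3) and (4) following as formal consequences.

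For (1), the strategy is to show that any irreducible graded $A$-supermodule $V$ is concentrated in a single degree. Laurentian-ness makes $V$ bounded below in degree; after a shift let $V_d$ be its lowest nonzero component. The sub-supermodule of $V$ generated by $V_{>d}$ cannot contain $V_d$ --- using the Laurentian bound one checks that $A$-orbits of $V_{>d}$ remain in $V_{\geq d+1}$ --- hence by irreducibility it must be zero, giving $V_{>d}=0$. A parallel argument, exploiting that iterated composition of negative-degree elements has total degree tending to $-\infty$ and must eventually vanish by the Laurentian bound, shows $A_{<0}$ acts locally nilpotently on $V$, and irreducibility forces $A_{<0}V=0$. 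Therefore $V=V_d$ is an irreducible $A_0$-supermodule, finite-dimensional since $A_0$ is. Classical theory of finite-dimensional $\C$-superalgebras supplies finitely many such up to parity shift, establishing (1).

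For (2), I would invoke a graded super Schur's lemma. Finite-dimensionality of $V$ from (1), together with algebraic closure of $\C$, gives $\HOM_A(V,V)_0^{\ev}=\C$, so $\HOM_A(V,V)_0^{\od}$ is either zero (type $\mathtt M$) or one-dimensional, spanned by an odd endomorphism $\xi$ with $\xi^2\in\C^\times\cdot\mathrm{id}$; after rescaling, $\xi^2=\mathrm{id}$. Unwinding the definition of the $\Pi$-twisted action shows such a $\xi$ is precisely an isomorphism $\Pi V\xrightarrow{\sim} V$ in $\HOM_A(\Pi V,V)_0^{\ev}$, proving the type $\mathtt Q$ characterization. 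For the $|A|$-module decomposition in type $\mathtt Q$, combine $\xi$ with the parity involution $\sigma(v)=(-1)^{p(v)}v$: a direct sign calculation gives $\xi\sigma=-\sigma\xi$, so $\sigma\xi$ is genuinely $|A|$-linear with $(\sigma\xi)^2=-\mathrm{id}$, and its $\pm i$-eigenspaces yield $|V|=N\oplus\sigma(N)$ with $\sigma(N)\cong N^\varsigma$ via the map $\sigma$ itself. Simplicity of $N$ over $|A|$ follows because a proper $|A|$-submodule $W\subset N$ would give a proper sub-supermodule $W\oplus\sigma(W)\subsetneq V$. The inequivalence $N\ncong N^\varsigma$ follows because any such isomorphism would be a nonzero $|A|$-anti-equivariant endomorphism of the simple $N$, which Schur forces to be a scalar, in turn forcing odd elements of $A$ to annihilate $N$---hence $V$---contradicting type $\mathtt Q$.

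Parts (3) and (4) are formal consequences. The irreducible graded $|A|$-modules are precisely $|V_i|$ for type $\mathtt M$ representatives $V_i$ and the $V_j^\pm$ for type $\mathtt Q$ representatives $V_j$, with pairwise distinct annihilators; this simultaneously gives (4)(ii) and, by taking intersections of annihilators, (3). Part (4)(i) follows from the $\Pi V\simeq V\iff$ type $\mathtt Q$ equivalence: in type $\mathtt M$, $V_i$ and $\Pi V_i$ are genuinely non-isomorphic simples, while in type $\mathtt Q$, $\Pi V_j\simeq V_j$ makes the entry redundant.

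\textbf{Main obstacle.} The subtlest step is (1), where one must carefully handle the coexistence of nontrivial $A_{>0}$ and $A_{<0}$. The standard graded-Nakayama argument directly handles non-negatively graded algebras, but for a general Laurentian $A$ one must exploit the Laurentian bound to establish local nilpotence of $A_{<0}$ on bounded-below modules and then combine with irreducibility to conclude $A_{<0}V=0$. A secondary subtlety in (2) is verifying $N\ncong N^\varsigma$, which requires ruling out the degenerate case where all odd elements of $A$ annihilate $N$; this is incompatible with type $\mathtt Q$ because then odd elements would annihilate all of $V$, forcing $V$ to be an $|A|/(A^{\od}A)$-module and hence of type $\mathtt M$.
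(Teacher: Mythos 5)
The paper itself does not write out a proof of this lemma; it simply cites \cite[Theorem 2.7.2]{NV2004} for part (1) and \cite[Section 12.2]{K2005} for parts (2)--(4), so the comparison has to be against the standard arguments those references contain. Your treatment of (2)--(4) is essentially that standard argument (graded super Schur's lemma, the operator $\sigma\xi$ with $(\sigma\xi)^2=-\mathrm{id}$ and its $\pm i$-eigenspaces, and the formal deduction of (3) and (4)), and I see no problem there, granted finite-dimensionality of the simples.

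Part (1), however, contains a genuine gap, and the intermediate claim it rests on is false. You assert that an irreducible graded $A$-supermodule is concentrated in a single degree. This fails already for $A=M_2(\C)$ graded by $\deg e_{ij}=i-j$, which is Laurentian, and whose unique graded simple $\C^2$ lives in two degrees; more damningly, it fails for the very algebras this lemma is applied to in the paper: the nil-Hecke algebra $NH_n=R(ni)$, $i\in I^{\re}\cap I_{\ev}$, is Laurentian and its unique graded simple $V(i^n)$ has graded dimension $[n]_i^+!$, supported in many degrees. The precise step that breaks is ``$A_{<0}$ acts locally nilpotently on $V$, and irreducibility forces $A_{<0}V=0$.'' Local nilpotence is correct (any product of $k$ negative-degree elements lowers degree by at least $k$, and $V$ is bounded below), but it gives you no submodule to feed into irreducibility: neither $A_{<0}V$ nor $\{v\mid A_{<0}v=0\}$ is $A$-stable when $A_{>0}\neq 0$, so nothing forces $A_{<0}V=0$ (and indeed $\tau_1$ acts nontrivially and in negative degree on $V(i^2)$). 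Consequently the reduction of (1) to irreducible $A_0$-supermodules collapses. The correct route (as in N\u{a}st\u{a}sescu--Van Oystaeyen) does not pass through concentration in a single degree: one shows instead that a graded simple $V=Av$ is Laurentian as a graded vector space and that the graded radical of $A$ has finite graded codimension in an appropriate sense, so that all graded simples are modules over a fixed finite-dimensional (super)algebra quotient; finiteness and finite-dimensionality then follow. Since (2)--(4) in your write-up all invoke finite-dimensionality from (1), this gap propagates, even though the statements themselves remain true.
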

\begin{proof}
Part (1) is a direct consequence of \cite[Theorem 2.7.2]{NV2004}.
The statements in  (2)-(4) follow from the corresponding results in \cite[Section 12.2]{K2005}.
\end{proof}

\subsection{Quantum Borcherds (super)algebras}\

\vskip 2mm

Let $I=I_{\ev}\sqcup I_{\od}$ be a $\Z_2$-graded index set. A {\it Borcherds-Cartan superdatum} $(I,\widetilde A,\cdot)$ consists of
\begin{itemize}
\item [(1)] an integer-valued matrix $\widetilde A=(a_{ij})_{i,j\in I}$ satisfying
\begin{itemize}
\item [(i)] $a_{ii}=2,0,-2,-4,\dots,$
\item [(ii)] $a_{ij}\in \Z_{\leq 0}$ for $i\neq j$,
\item [(iii)] $a_{ij}=0$ if and only if $a_{ji}=0$,
\item [(iv)] $a_{ij}\in 2\Z$ for $i\in I_{\od},j\in I$,
\item [(v)] there is a diagonal matrix $D=\text{diag}(r_i\in\Z_{>0}\mid i\in I)$ such that $D\widetilde A$ is symmetric.
\end{itemize}
\item [(2)] a symmetric bilinear form $\nu,\nu'\mapsto \nu\cdot \nu'$ on $\Z[I]$ taking values in $\Z$, such that $$i\cdot j=r_ia_{ij}=r_ja_{ji} \ \ \text{for all} \ i,j\in I.$$
\end{itemize}

We set $I^\re=\{i\in I\mid a_{ii}=2\}$ and $I^\im=\{i\in I\mid a_{ii}\leq 0\}$. For each $i\in I$, define $q_i=q^{r_i}$.  For $i\in I^\re, n\in \N$,  set
 $$[n]_i^\pi=\frac{(\pi^{p(i)}q_i)^n-q_i^{-n}}{\pi^{p(i)}q_i-q_i^{-1}},\ \ [n]_i^\pi !=[n]_i^\pi [n-1]_i^\pi \cdots [1]_i^\pi. $$
In particular, we write
 $$[n]_i^+=[n]_i^\pi|_{\pi=1},\ \ [n]_i^-=[n]_i^\pi|_{\pi=-1}.$$

\begin{definition}The (half part) {\it quantum Borcherds algebra} $\XX$ associated to  $(I,\widetilde A,\cdot)$ is the $\Q(q)$-algebra generated by $f_i$ $(i\in I)$, subject to
\begin{equation*}
\begin{aligned}
& \sum_{a+b=1-a_{ij}}(-1)^af_i^{(a)}f_jf_i^{(b)}=0 \quad \text{for} \ i\in I^\re,j\in I\ \text{and}\ i\neq j,\\
& f_if_j-f_jf_i=0 \quad \text{for}\ i,j\in I\ \text{with}\ i\cdot j=0.
\end{aligned}
\end{equation*}
Here  $f_i^{(a)}=f_i^a/[a]^+_i!$. The algebra $\XX$ is $\N[I]$-graded by assigning $|f_i|=i$.
\end{definition}

 By \cite[Chapter 1]{Lus} and \cite[Section 3]{Kas91}, $\XX$ admits a comultiplication
 $$\rho_+:\XX\rightarrow \XX\otimes \XX,\ \ f_i\mapsto f_i\otimes 1+1\otimes f_i\ \ \tx{for}\ i\in I,$$
 where $\XX\otimes \XX$ is endowed with the twisted multiplication
$$(x_1\otimes x_2)(y_1\otimes y_2)=q^{-|x_2|\cdot |y_1|}x_1y_1\otimes x_2y_2.$$ There is a nondegenerate symmetric bilinear form $\{ \ , \ \}_+: \XX\times \XX\rightarrow \Q(q)$ satisfying
\begin{itemize}
\item[(i)] $\{x, y\}_+ =0$ if $|x| \neq |y|$,
\item[(ii)] $\{1,1\}_+ = 1$,
\item[(iii)] $\{f_{i}, f_{i}\}_+ = (1-q_i^2)^{-1}$ for all $i\in I$,
\item[(iv)] $\{x, yz\}_+ = \{\rho_+(x), y \otimes z\}_+$  for $x,y,z
\in \XX$.
\end{itemize}

Let $\mathcal A=\Z[q,q^{-1}]$ be the ring of Laurent polynomials. The $\mathcal A$-form ${_{\mathcal A} \XX}$ is the $\mathcal A$-subalgebra of $\XX$ generated by the  $f_i^{(n)}$ for $i\in I^\re, n\in \Z_{>0}$ and $f_i$ for $i\in I^\im$.

\begin{definition}\cite[Definition 2.7]{BKM1998}\   The (half part) {\it quantum Borcherds superalgebra} $\YY$ associated to  $(I,\widetilde A,\cdot)$ is the $\Q(q)$-algebra generated by $f_i$ $(i\in I)$, subject to
\begin{equation*}
\begin{aligned}
& \sum_{a+b=1-a_{ij}}(-1)^{a+p(a;i,j)}f_i^{(a)}f_jf_i^{(b)}=0 \quad \text{for} \ i\in I^\re,j\in I\ \text{and}\ i\neq j,\\
& f_if_j-(-1)^{p(i)p(j)}f_jf_i=0 \quad \text{for}\ i,j\in I\ \text{with}\ i\cdot j=0.
\end{aligned}
\end{equation*}
Here   $p(a;i,j)=ap(i)p(j)+\frac{1}{2}a(a-1)p(i)$ and $f_i^{(a)}=f_i^a/[a]^-_i!$.
The algebra $\YY$ is $(\N[I],\Z_2)$-graded by assigning $|f_i|=i$ and $p(f_i)=p(i)$.
\end{definition}

Again, by \cite[Chapter 1]{Lus} and \cite[Section 3]{Kas91}, $\YY$ admits a comultiplication
 $$\rho_-:\YY\rightarrow \YY\otimes \YY,\ f_i\mapsto f_i\otimes 1+1\otimes f_i\ \ \tx{for}\ i\in I,$$
  where $\YY\otimes\YY$ is endowed with the twisted multiplication
$$(x_1\otimes x_2)(y_1\otimes y_2)=(-1)^{p(x_2)p(y_1)}q^{-|x_2|\cdot |y_1|}x_1y_1\otimes x_2y_2.$$ One can also show that there is a nondegenerate symmetric bilinear form $\{ \ , \ \}_-: \YY\times \YY\rightarrow \Q(q)$ satisfying
\begin{itemize}
\item[(i)] $\{x, y\}_- =0$ if $|x| \neq |y|$,
\item[(ii)] $\{1,1\}_- = 1$,
\item[(iii)] $\{f_{i}, f_{i}\}_- = (1-(-1)^{p(i)}q_i^2)^{-1}$ for all $i\in I$,
\item[(iv)] $\{x, yz\}_- = \{\rho_-(x), y \otimes z\}_-$  for $x,y,z
\in \YY$.
\end{itemize}
A detailed illustration of these results is provided in the Appendix, where the same arguments also apply to the quantum Borcherds algebra.

Similarly, the $\mathcal A$-form ${_{\mathcal A} \YY}$ is the $\mathcal A$-subalgebra of $\YY$ generated by the  $f_i^{(n)}$ for $i\in I^\re,n\in \Z_{> 0}$ and $f_i$ for $i\in I^\im$.

\subsection{Covering algebras}\

\vskip 2mm

Given a Borcherds-Cartan superdatum $(I,\widetilde{A},\cdot)$, we define a covering algebra $\HH$ for $\XX$ and $\YY$. This construction was first introduced in \cite{HW2015}.

Let $\HHH$ be the free $\Q(q)^\pi$-algebra generated by $\theta_i$ $(i\in I)$ with $|\theta_i|=i$ and $p(\theta_i)=p(i)$. We have a comultiplication $\rho_\pi:\HHH\rightarrow \HHH\otimes \HHH$ given by $\rho_\pi{(\theta_i)}=\theta_i\otimes 1+1\otimes \theta_i$ ($i\in I$). Here $\HHH\otimes \HHH$ is endowed with the twisted multiplication
$$(x_1\otimes x_2)(y_1\otimes y_2)=\pi^{p(x_2)p(y_1)}q^{-|x_2|\cdot |y_1|}x_1y_1\otimes x_2y_2.$$ As in \cite[1.2.3]{Lus}, there is a   symmetric bilinear form $\{ \ , \ \}_\pi:\HHH\times \HHH\rightarrow \Q(q)^\pi$ satisfying
\begin{itemize}
\item[(i)] $\{x, y\}_\pi =0$ if $|x| \neq |y|$,
\item[(ii)] $\{1,1\}_\pi= 1$,
\item[(iii)] $\{\theta_{i}, \theta_{i}\}_\pi = (1-\pi^{p(i)}q_i^2)^{-1}$ for all $i\in I$,
\item[(iv)] $\{x, yz\}_\pi= \{\rho_\pi(x), y \otimes z\}_\pi$  for $x,y,z
\in \HHH$.
\end{itemize}
Similar to \cite[1.4.5]{Lus}, the following elements lie in the radical of the bilinear form  $\{ \ , \ \}_\pi$
\begin{equation}\label{serrepp}
\begin{aligned}
& \sum_{a+b=1-a_{ij}}(-1)^{a}\pi^{p(a;i,j)}\theta_i^{(a)}\theta_j\theta_i^{(b)} \quad \text{for} \ i\in I^\re,j\in I\ \text{and}\ i\neq j,\\
& \theta_i\theta_j-\pi^{p(i)p(j)}\theta_j\theta_i \quad \text{for}\ i,j\in I\ \text{with}\ i\cdot j=0.
\end{aligned}
\end{equation}
Here  $\theta_i^{(a)}=\theta_i^a/[a]^\pi_i!$. More generally, the radical also contains the elements
$$\sum_{a+b=1-na_{ij}}(-1)^{a}\pi^{p(a;i,j;n)}\theta_i^{(a)}\theta_j^n\theta_i^{(b)}  \quad \text{for} \ i\in I^\re,j\in I,n\geq 1\ \text{and}\ i\neq j,$$
where $p(a;i,j;n)=anp(i)p(j)+\frac{1}{2}a(a-1)p(i)$ (see, for instance, \cite[Theorem~1.4]{FKKT2022}).

Let $\HH$ be the quotient of $\HHH$ by the two-sided ideal generated by (\ref{serrepp}).

\begin{proposition}\label{nondeg} {\it $\HH\cong \HHH/\text{rad}\{ \ , \ \}_\pi.$}
\begin{proof}There are canonical $\Q(q)$-algebra isomorphisms, given by $\theta_i\mapsto f_i$ $(i\in I)$:
\begin{align*} \HH\big/(\pi-1)\cong \Q(q)\otimes_{\pi=1}\HH \cong \XX,\\
\HH\big/(\pi+1)\cong \Q(q)\otimes_{\pi=-1}\HH \cong \YY.\end{align*}
The tensor products here are taken over $\Q(q)^\pi$, with $\Q(q)$ viewed as a $\Q(q)^\pi$-module via the specialization $\pi\mapsto 1$ (resp. $\pi\mapsto -1$). Under the isomorphism, the bilinear form $\{ \ , \ \}_\pi $ specialized at $\pi=1$ (resp. $\pi=-1$) on $\HH\big/(\pi-1)$ (resp. $\HH\big/(\pi+1)$) coincides with $\{ \ , \ \}_+$ (resp. $\{ \ , \ \}_-$), and is therefore nondegenerate in both cases. Now, let $x$ lie in the radical of  $\{ \ , \ \}_\pi$ on $\HH$. Then $x\in (\pi-1)\cap (\pi+1)=(\pi^2-1)=0$, which implies that
  $\{ \ , \ \}_\pi$ is nondegenerate on $\HH$.
Consequently, we have $\HH\cong \HHH/\text{rad}\{ \ , \ \}_\pi$.
\end{proof}
\end{proposition}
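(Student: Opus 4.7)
The plan is to exploit the decomposition $\Q(q)^\pi = \Q(q)[\pi]/(\pi^2-1)$, which via the Chinese Remainder Theorem splits as $\Q(q)\oplus\Q(q)$ under the two specializations $\pi\mapsto\pm 1$. This will let me detect nondegeneracy of $\{\,,\,\}_\pi$ through the two classical nondegeneracies of $\{\,,\,\}_+$ on $\XX$ and $\{\,,\,\}_-$ on $\YY$.

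First I would observe that the generators (\ref{serrepp}) of the defining ideal $J$ of $\HH$ already lie in $\text{rad}\{\,,\,\}_\pi$ on $\HHH$, as noted just above the proposition. Hence the form descends to a well-defined symmetric bilinear form on $\HH=\HHH/J$, and the inclusion $J\subseteq\text{rad}\{\,,\,\}_\pi$ on $\HHH$ is automatic. The real content of the proposition is the reverse inclusion, which amounts to showing that the induced form on $\HH$ is nondegenerate.

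Next I would set up the two specializations. Sending $\pi\mapsto 1$ kills the parity twist both in the multiplication on $\HHH\otimes\HHH$ and in the Serre relations, yielding a $\Q(q)$-algebra isomorphism $\HH/(\pi-1)\HH\cong\XX$ via $\theta_i\mapsto f_i$; the defining value $\{\theta_i,\theta_i\}_\pi=(1-\pi^{p(i)}q_i^2)^{-1}$ then reduces to $(1-q_i^2)^{-1}=\{f_i,f_i\}_+$, so the specialized form is precisely $\{\,,\,\}_+$. The parallel argument at $\pi\mapsto -1$ gives $\HH/(\pi+1)\HH\cong\YY$ with specialized form $\{\,,\,\}_-$. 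Both of these are nondegenerate by the standard (appendix) arguments.

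Finally, if $x\in\HH$ lies in $\text{rad}\{\,,\,\}_\pi$, its image in each of $\XX$ and $\YY$ lies in the corresponding zero radical, so $x$ lies in both $(\pi-1)\HH$ and $(\pi+1)\HH$. The orthogonal idempotents $e_\pm=(1\pm\pi)/2$ in $\Q(q)^\pi$ give a ring decomposition $\Q(q)^\pi\cong\Q(q)\oplus\Q(q)$ in which $(\pi-1)$ and $(\pi+1)$ correspond to the two complementary factors, so $(\pi-1)\HH\cap(\pi+1)\HH=0$, forcing $x=0$. The main obstacle I anticipate is not this final descent step but the supporting bookkeeping: confirming that the $\pi$-twisted comultiplication on $\HHH\otimes\HHH$ genuinely specializes on the nose to the twisted comultiplication of $\XX$ and to the super-twisted comultiplication of $\YY$, and importing in full the nondegeneracy of $\{\,,\,\}_\pm$ in the Borcherds setting—these are routine but need to be done before the specialization argument can close.
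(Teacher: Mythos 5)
Your proposal is correct and follows essentially the same route as the paper: reduce modulo $\pi\mp 1$ to identify the specializations of $\HH$ with $\XX$ and $\YY$, import the nondegeneracy of $\{\,,\,\}_+$ and $\{\,,\,\}_-$, and conclude that any radical element lies in $(\pi-1)\cap(\pi+1)=0$. Your idempotent/CRT phrasing of the last step and your explicit remark that the inclusion of the Serre ideal into the radical is the ``easy'' direction are just slightly more spelled-out versions of what the paper does.
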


 The $\mathcal A^\pi$-form ${_{\mathcal A^\pi} \HH}$ is the $\mathcal A^\pi$-subalgebra of $\HH$ generated by the  $\theta_i^{(n)}$ for $i\in I^\re,n\in \Z_{> 0}$ and $\theta_i$ for $i\in I^\im$. Note that
 \begin{equation}\label{A-form}{_{\mathcal A^\pi}\HH}\big/(\pi-1)\cong   {_{\mathcal A}\XX},\quad
{_{\mathcal A^\pi}\HH}\big/(\pi+1)\cong   {_{\mathcal A}\YY}. \end{equation}

\vskip 5mm

\section{\textbf{Quiver Hecke superalgebras for Borcherds-Cartan superdatum}}

Given a Borcherds-Cartan superdatum $(I,\widetilde{A},\cdot)$, for $i,j\in I$ with $i\neq j$, we define $$T_{ij}=\{(a,b)\in \N\times \N \mid r_i a+r_j b=-i\cdot j;\ a\in 2\Z \ \text{if}\ i\in I_{\od};\ b\in 2\Z \ \text{if}\ j\in I_{\od}\}.$$
Choose a family of integers $\{t_{i,j;a,b}\}_{i\neq j,(a,b)\in T_{ij}}$ such that
$t_{i,j;a,b}=t_{j,i;b,a}$ and $t_{i,j;-a_{ij},0}\neq 0$. For each pair $i\neq j$ ,
define the polynomial
$$Q_{ij}(u,v)=\sum_{(a,b)\in T_{ij}}t_{i,j;a,b}\ u^av^b\in\Z\left<u,v\right>/ \langle uv-(-1)^{p(i)p(j)}vu \rangle.$$
This can be written explicitly as
$$Q_{ij}(u,v)=t_{i,j;-a_{ij},0}u^{-a_{ij}}+\sum_{a<-a_{ij},b<-a_{ji}}t_{i,j;a,b}\ u^av^b+t_{i,j;0,-a_{ji}}v^{-a_{ji}}.$$

By construction, we have $Q_{ij}(u,v)=Q_{ji}(v,u)$. Assume that $|u|=2r_i, p(u)=p(i)$ and $|v|=2r_j, p(v)=p(j)$. Then  $Q_{ij}(u,v)$  is homogeneous of degree $-2 i\cdot j$ and is parity even.

If $i\cdot j=0$, we take $Q_{ij}(u,v)=1$  for simplicity.

\subsection{Quiver Hecke superalgebras}\

\vskip 2mm

Fix $\nu=\sum_{i\in I}\nu_i i\in \N[I]$ with $\text{ht}{(\nu)}:=\sum_{i\in I}\nu_i=n$. Let $\text{Seq}(\nu)$ be the set of all sequences $\ii=i_1i_2\dots i_n$ in $I$ such that $\nu=i_1+i_2\cdots +i_n$.  
For $\ii \in \text{Seq}(\nu)$, the graded superalgebra $R(\nu)$ associated with  $(\widetilde A, I, \cdot)$ is generated by the bihomogeneous planar diagrams:
$$\begin{aligned}
&\hspace{3mm} 1_{\ii}=\genO{i_1}{i_k}{i_n} \quad  \text{with} \ |1_{\ii}|=0,\ p(1_{\ii})=0, \\ \\
& x_{k,\ii}=\genX{i_1}{i_k}{i_n} \quad 1\leq k\leq n,\  \text{with} \  |x_{k,\ii}|=2r_{i_k},\ p(x_{k,\ii})=p(i_k),\\ \\
& \tau_{k,\ii}=\genT{i_1}{i_k}{i_{k+1}}{i_n} \quad  1\leq k\leq n-1,\  \text{with} \ |\tau_{k,\ii}|=-i_k\cdot i_{k+1},\ p(\tau_{k,\ii})=p(i_k)p(i_{k+1}).
\end{aligned}$$

\vspace{2mm}

\noindent Subject to the following local relations as in the tensor product of superalgebras

\begin{equation*}
\SSL{i}{j}\ =\ (-1)^{p(i)p(j)}\ \SSR{i}{j},\quad\quad\quad \CCL{i}{j}{k}{l}\ =\ (-1)^{p(i)p(j)p(k)p(l)}\ \CCR{i}{j}{k}{l},
\end{equation*}

$$ \SCL{i}{j}{k}\ =\ (-1)^{p(i)p(j)p(k)}\ \SCR{i}{j}{k}, \quad\quad\quad  \SCLL{i}{j}{k}\ =\ (-1)^{p(i)p(j)p(k)}\ \SCRR{i}{j}{k}, $$
and the local relations
\begin{equation}
 \DC{i}{j} \  =  \  \begin{cases}
 \  0 & \text{ if } i= j, \\  \\
\ Q_{ij}\Big(\ \DCL{i}{j} \ , \ \DCR{i}{j}\ \Big) & \text{ if } i\neq  j,
  \end{cases}
\end{equation}

\begin{equation}
 \LU{{}}{i}{i} \   -  \ \RD{{}}{i}{i}\ =\  \LD{{}}{i}{i}   \ - \ \RU{{}}{i}{i}\ =\ \AS{i}{i}  \quad \text{ if } i \in I^{\tx{re}}\cap I_{\ev},
\end{equation}

\begin{equation}
 \LU{{}}{i}{i} \   +  \ \RD{{}}{i}{i}\ =\  \LD{{}}{i}{i}   \ + \ \RU{{}}{i}{i}\ =\ \AS{i}{i}  \quad \text{ if } i \in I^{\tx{re}}\cap I_{\od},
\end{equation}

\begin{equation}
 \LU{{}}{i}{j} \   = \ (-1)^{p(i)p(j)} \RD{{}}{i}{j}  \quad\quad\quad \LD{{}}{i}{j}   \ = \ (-1)^{p(i)p(j)} \RU{{}}{i}{j}  \quad \text{ otherwise},
\end{equation}

\begin{equation}\label{braid}
 \BraidL{i}{j}{i} \  -  \ \BraidR{i}{j}{i} \ = {\large\  \begin{cases} \frac{ Q_{ij}(u,v)-Q_{ij}(w,v)} { u-w}  & \text{ if } i\in I^{\tx{re}}\cap I_{\ev},\ i\ne j,\vspace{3mm} \\
  (-1)^{p(j)}(u-w)\frac{ Q_{ij}(u,v)-Q_{ij}(w,v)} { u^2-w^2}  & \text{ if } i\in I^{\tx{re}}\cap I_{\od},\ i\ne j.
 \end{cases}}
 \end{equation}

 \quad Here $u=\threeA{i}{j}{i},\ v=\threeB{i}{j}{i},\ w= \threeC{i}{j}{i}$
 \begin{equation}
\BraidL{i}{j}{k} \  = \ \BraidR{i}{j}{k} \quad\quad \text{otherwise}.
\end{equation}

\vskip 2mm

For a detailed explanation of the braid-like planar diagrams and their multiplication, we refer to \cite{KL2009}. For $\ii,\jj\in \Seq(\nu)$,  set $_{\jj}R(\nu)_{\ii}=1_{\jj}R(\nu)1_{\ii}$. Then, $R(\nu)$ decomposes as
$$R(\nu)=\bigoplus_{\ii,\jj\in\Seq(\nu)} {_{\jj}R(\nu)_{\ii}}.$$

\subsection{Polynomial representations}\

\vskip 2mm
 Let $\mathscr{P}$ be the superalgebra with even generators $y_1,\dots,y_n,z_1,\dots,z_n$  and odd generators  $c_1,\dots, c_n$, subject to the following relations:
\begin{itemize}
\item[(i)] $y_iy_j=y_jy_i,\  z_iz_j=z_jz_i, \ y_iz_j=z_jy_i$ $(1\leq i,j\leq n)$,
\item[(ii)] $c_i^2=1$,\ $c_ic_j=-c_jc_i$  $(1\leq i\neq j\leq n)$,
\item[(iii)] $c_iy_j=(-1)^{\delta_{ij}}y_jc_i,\ c_iz_j=(-1)^{\delta_{ij}} z_jc_i$   $(1\leq i,j \leq n)$.
\end{itemize}

The symmetric group $S_n$ acts on $\mathscr{P}$  by permuting the symbols $y_i,z_j$, and $c_k$ independently. For $1\leq k\leq n-1$, define a linear operator $\sigma_k$   on $\mathscr{P}$ by
\begin{align*} & \sigma_k(y_k)=-1-c_kc_{k+1},\  \sigma_k(y_{k+1})=1-c_kc_{k+1},\  \sigma_k(y_{j})=0\  \text{for}\ j \neq k,k+1,\\
& \sigma_k(z_j)=\sigma_k(c_j)=0\ \ \text{for all}\ j,
\end{align*}
 and the Leibniz rule
 $$\sigma_k(fg)=\sigma_k(f)g+s_k(f)\sigma_k(g)\ \ \tx{for all}\ f,g\in\mathscr{P}.$$

By \cite[Lemma 3.6]{Le2023}, these operators satisfy the nilCoxeter relations
 $$\sigma_k^2=0,\ \sigma_k\sigma_l=\sigma_l\sigma_k \ (|k-l|>1),\ \sigma_k\sigma_{k+1}\sigma_k=\sigma_{k+1}\sigma_k\sigma_{k+1}.$$
Similarly, we define $\sigma_k'$ to be the operator obtained by interchanging  $y_j$ with $z_j$ in the definition of $\sigma_k$.

\begin{remark}\label{sig} By definition,  for $f=f(y_1,\cdots,y_n)\in \C[y_1,\dots,y_n]\subset\mathscr{P}$, we have
$$\sigma_k(f)=\frac{s_kf-f}{y_k-y_{k+1}}+c_kc_{k+1}\frac{\overline{s}_kf-f}{y_k+y_{k+1}},$$
where $\overline{s}_kf=f(y_1,\dots,y_{k-1},-y_{k+1},-y_k,y_{k+2}\dots,y_n)$.
\end{remark}

For each $\ii\in \Seq(\nu)$, we define a superalgebra $\mathscr{P}_{\ii}$ with even generators $y_1(\ii),\dots,y_n(\ii)$, $z_1(\ii),\dots,z_n(\ii)$  and odd generators  $c_1(\ii),\dots, c_n(\ii)$, subject to
\begin{itemize}
\item[(i)] $y_i(\ii)y_j(\ii)=y_j(\ii)y_i(\ii),\  z_i(\ii)z_j(\ii)=z_j(\ii)z_i(\ii), \ y_i(\ii)z_j(\ii)=z_j(\ii)y_i(\ii)$ $(1\leq i,j\leq n)$,
\item[(ii)] $c_i(\ii)^2=1$,\ $c_i(\ii)c_j(\ii)=-c_j(\ii)c_i(\ii)$  $(1\leq i\neq j \leq n)$,
\item[(iii)] $c_k(\ii)y_l(\ii)= (-1)^{\delta_{kl}p(i_k)}y_l(\ii)c_k(\ii),\ c_k(\ii)z_l(\ii)=(-1)^{\delta_{kl}p(i_k)}z_l(\ii)c_k(\ii)$   $(1\leq k,l\leq n)$.
\end{itemize}
 and form the $\C$-vector space
$\mathscr{P}_{\nu}=\bigoplus_{\ii\in \Seq(\nu)}\mathscr{P}_{\ii}$.
Let $s_k$  acts on $\mathscr{P}_{\nu}$   by
$$  y_j(\ii)\mapsto y_{s_k (j)}(s_k\ii),\ z_j(\ii)\mapsto z_{s_k (j)}(s_k\ii),\ c_j(\ii)\mapsto c_{s_k (j)}(s_k\ii)\ \ \text{for}\ 1\leq j\leq n. $$
 If $i_k=i_{k+1}\in  I_{\ev}$, we define  algebraic operators $\widetilde s_k$ and $\overset{\approx}{s}_k$ on $\mathscr{P}_{\ii}$ by
$$ \widetilde s_k:\ y_j(\ii)\mapsto y_{s_k (j)}(\ii),\ z_j(\ii)\mapsto z_{j}(\ii),\ c_j(\ii)\mapsto c_j(\ii)\ \ \text{for}\ 1\leq j\leq n, $$
$$ \overset{\approx}{s}_k:\ y_j(\ii)\mapsto y_{s_k(j)}(\ii),\ z_j(\ii)\mapsto z_{s_k (j)}(\ii),\ c_j(\ii)\mapsto c_j(\ii)\ \ \text{for}\ 1\leq j\leq n. $$

Let $\Lambda$ be a graph with vertex set $I$, where there is an oriented edge between vertices $i$ and $j$ if $i\neq j$.  Choose a collection $\{\gamma_{ij}\}_{i\neq j\in I}\subset\C$ such that $$\gamma_{ij}\gamma_{ji}=-1/2\ \ \tx{if}\ i,j\in I_{\od};\ \   \gamma_{ij}=1\ \ \tx{if}\ i\in I_{\ev} \  \tx{or}\  j\in I_{\ev}.$$

\begin{definition}We define an action of   $R({\nu})$ on $\Pol_{\nu}$ as follows:

\begin{itemize}

\item[(i)]  If $\ii\neq\kk$,  $_{\jj}R(\nu)_{\ii}$ acts on $\Pol_{\kk}$ by $0$.

\vskip 1mm

\item[(ii)]  For $f\in \Pol_{\ii}$,
$1_{\ii}\cdot f=f,  \ x_{k,\ii}\cdot f=c_{k}(\ii)^{p(i_k)}y_k{(\ii)}f.$

\vskip 1mm

\item[(iii)] Assume $\ii=i_1\dots i_n$ with $i_k=i, i_{k+1}=j$. For $f\in \Pol_{\ii}$,
\begin{equation*}
\tau_{k,\ii}\cdot f=\begin{cases}
\frac{f-\widetilde s_kf}{y_k{(\ii)}-y_{k+1}(\ii)} & \text{if}\ i=j\in I^\re\cap I_{\ev},\vspace{1mm}\\
\frac{\widetilde s_kf- \overset{\approx}{s}_k f}{z_k{(\ii)}-z_{k+1}(\ii)} & \text{if}\ i=j\in I^\im\cap I_{\ev},\vspace{1mm}\\
\frac{1}{2}(c_k(\ii)-c_{k+1}(\ii))\sigma_kf & \text{if}\ i=j\in I^\re\cap I_{\od},\vspace{1mm}\\
\frac{1}{2}(c_k(\ii)-c_{k+1}(\ii))\sigma_k'f & \text{if}\ i=j\in I^\im\cap I_{\od},\vspace{1mm}\\
\frac{1}{\gamma_{ij}}\left(\frac{1}{2}(c_k(s_k\ii)-c_{k+1}(s_k\ii))\right)^{p(i)p(j)}s_kf & \text{if}\  i\leftarrow j,\vspace{1mm}\\
\frac{1}{\gamma_{ij}}(\frac{1}{2}\left(c_k(s_k\ii)-c_{k+1}(s_k\ii))\right)^{p(i)p(j)}\left(Q_{ij}(x_{k+1,s_k\ii},x_{k,s_k\ii})\cdot s_kf\right)\ & \text{if}\  i  \rightarrow j.
\end{cases}
\end{equation*}
\end{itemize}
\end{definition}

\vskip 2mm

\begin{proposition}\label{P}{\it
$\Pol_\nu$ is a $R(\nu)$-module with the action defined above.}
\begin{proof}
We check the defining relations for $R(\nu)$.
We check only the local relation (\ref{braid}) in the case where $i\in I^{\text{re}}\cap I_{\od},j\in I_{\od}$ and $i \rightarrow j$.  The remaining cases are straightforward to verify.
For simplicity, we omit the label $\ii$ and take $k=1$. Note that
\begin{align*}
 \tau_1\tau_2\tau_1\cdot f=\frac{1}{8\gamma_{ij}\gamma_{ji}}(c_1-c_2)(c_1-c_3)(c_2-c_3)\ s_1\sigma_2\big(Q_{ij}(x_2,x_1)s_1f\big)\\
=\frac{1}{2}(c_3-c_1) \Big[\big(s_1\sigma_2(Q_{ij}(x_2,x_1)\big)f+  Q_{ij}(x_3,x_2)s_1\sigma_2s_1f\Big].
\end{align*}
According to Remark \ref{sig},
\begin{align*}
s_1\sigma_2(Q_{ij}(x_2,x_1))=\frac{Q_{ij}(x_3,x_2)-Q_{ij}(x_1,x_2)}{y_1-y_{3}}+c_1c_{3}\frac{Q_{ij}(x_3,x_2)-Q_{ij}(x_1,x_2)}{y_1+y_3}\\
=(y_1+y_3)\frac{Q_{ij}(x_3,x_2)-Q_{ij}(x_1,x_2)}{x_3^2-x_1^2}+c_1c_{3}(y_1-y_3)\frac{Q_{ij}(x_3,x_2)-Q_{ij}(x_1,x_2)}{x_3^2-x_1^2}.
\end{align*}
Thus, $$
\tau_1\tau_2\tau_1\cdot f=(x_3-x_1)\frac{Q_{ij}(x_3,x_2)-Q_{ij}(x_1,x_2)}{x_3^2-x_1^2} f+ \frac{1}{2}(c_3-c_1) Q_{ij}(x_3,x_2)s_1\sigma_2s_1f. $$
On the other hand, we have
 $$\tau_2\tau_1\tau_2\cdot f=\frac{1}{2}(c_3-c_1)  Q_{ij}(x_3,x_2)s_2\sigma_1s_2f. $$
It suffices to verify that $s_1\sigma_2s_1f =s_2\sigma_1s_2f$, which can be proven by induction.
\end{proof}
\end{proposition}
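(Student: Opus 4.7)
The plan is to verify each defining local relation of $R(\nu)$ directly on $\Pol_\nu$, working through them in order of increasing complexity. Since the generators $y_k(\ii), z_k(\ii), c_k(\ii)$ of $\Pol_{\ii}$ have been assigned parities matching the generators $1_{\ii}, x_{k,\ii}, \tau_{k,\ii}$ of $R(\nu)$, and the operators in part (iii) of the definition are local (acting nontrivially only on coordinates with labels $k, k+1$), the supercommutation relations in the tensor product of superalgebras reduce to the statement that operators at disjoint positions carry the expected Koszul signs. This is immediate from the commutation properties (i)--(iii) of $\Pol_{\ii}$ and the fact that $s_k, \widetilde s_k, \overset{\approx}{s}_k, \sigma_k, \sigma_k'$ all act as the identity on generators with labels outside $\{k, k+1\}$.

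Next I would verify the dot-dot collision relation. When $i=j$ this reduces to showing $\tau_k^2 \cdot f = 0$: for the real even case this is the classical nilpotency of the divided difference; for the real odd case the Leibniz operator $\sigma_k$ satisfies $\sigma_k^2 = 0$ by \cite[Lemma 3.6]{Le2023}, while the factors $(c_k - c_{k+1})$ combine to a central element that annihilates the result; for the imaginary cases one uses $\widetilde s_k \overset{\approx}{s}_k = \overset{\approx}{s}_k \widetilde s_k$. When $i \neq j$ the composition of two crossings should produce multiplication by $Q_{ij}(x_{k,\ii}, x_{k+1,\ii})$, which follows from applying the formula of case (iii)(v)(vi) twice and using the normalization $\gamma_{ij}\gamma_{ji} = -1/2$ in the odd-odd case and $\gamma_{ij} = 1$ otherwise. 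The dot-crossing relations are similar: one slides $y_k(\ii)$ (or $c_k(\ii) y_k(\ii)$ in the odd case) past the $\tau_k$ action, producing the desired jump from the explicit formulas, with the $i=j$ real odd case using $\sigma_k(y_k) = -1 - c_k c_{k+1}$ combined with the factor $\tfrac{1}{2}(c_k - c_{k+1})$.

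The braid relation (\ref{braid}) is the main obstacle. Following the model computation the author gives for $i \in I^{\re}\cap I_{\od}$, $j \in I_{\od}$, $i\rightarrow j$, each remaining subcase (even-even, even-odd, odd-even with both orientations of the edge, and the plain braid relation when no two strands share a label) reduces to isolating the coefficient of $f$ in the polynomial part (which matches the right-hand side of (\ref{braid}) after applying Remark \ref{sig} and its even analogue) plus a "symmetric" term involving the conjugation identity
\begin{equation*}
(s_1 \Delta_2 s_1)f \;=\; (s_2 \Delta_1 s_2)f
\end{equation*}
where $\Delta_k$ is the appropriate divided-difference or Leibniz operator for the middle strand.

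The hard part is proving this conjugation identity cleanly in all subcases. One route is induction on the total degree of $f$ in the $y$ and $z$ variables, reducing to monomial calculations; another, probably cleaner, is to observe that both sides are operators that send $y_j(\ii) \mapsto y_{s_1 s_2 s_1 (j)}(\ii)$ up to the common Clifford twist, and then verify agreement on the generators of $\Pol_{\ii}$ using the explicit formulas of Remark \ref{sig}. Once the conjugation identity is in hand, the braid relation in each case matches the formula (\ref{braid}) after collecting the polynomial part produced by the $\sigma_k$-Leibniz rule. The other cases not sharing labels across the three strands yield the trivial braid relation directly because the operators $s_k$ satisfy the ordinary braid relations on $\Pol_\nu$.
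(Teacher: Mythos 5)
Your proposal is correct and follows essentially the same route as the paper: direct verification of the local relations, with the braid relation reduced to matching the polynomial coefficient of $f$ (via Remark \ref{sig} and the Leibniz rule for $\sigma_k$) plus the conjugation identity $s_1\sigma_2 s_1 = s_2\sigma_1 s_2$, which the paper also isolates as the final step and disposes of by induction. Your suggestion to verify that identity on generators using the twisted-derivation property is only a cosmetic variant of that induction, so there is no substantive difference in approach.
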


\subsection{Algebras $R(ni)$ }\

\vskip 2mm

Let $\nu = ni$ for some $i \in I$. We abbreviate the generators of $R(ni)$ by $x_1, \dots, x_n$ and $\tau_1, \dots, \tau_{n-1}$. For each $\omega \in S_n$, fix a reduced expression $\omega = s_{k_1} \cdots s_{k_t}$  and define  $\tau_\omega= \tau_{k_1}\cdots \tau_{k_t}\in R(ni)$. Note that $\tau_\omega$ is independent of the choice of reduced expression, up to a sign $\pm 1$, when $i$ is odd.

\textbf{Case 1}: $i  \in I_{\ev}$.

 The  algebra $R(ni)$ is  the nil-Hecke algebra $NH_n$ when $i\in I^\re$ (see \cite[Example 2.2]{KL2009}),  and the algebra described in \cite[Section 2.2]{TW2023} when $i\in I^\im$. These algebras have a  basis $\{x_1^{u_1}\cdots x_n^{u_n}\tau_\omega\mid \omega\in S_n,u_1,\dots,u_n\geq 0\}$.
The polynomial algebra $P_n = \mathbb{C}[x_1, \dots, x_n]$ can be identified with the subalgebra of $R(ni)$ generated by $x_1, \dots, x_n$. The center $Z(R(ni))$ consists of the symmetric polynomials in $x_1, \dots, x_n$.

 If $i\in I^\re$, up to isomorphism and degree shifts, $NH_n$ has a unique irreducible graded module $V(i^n)$, which may have graded dimension   $[n]_i^+!$. Its graded projective cover is given by
\begin{equation}\label{peven}P_{i^{(n)}}= NH_ne_{i,n}\left\{\binom{n}{2} \cdot r_i\right\},\end{equation}
 where $e_{i,n} = x_1^{n-1} x_2^{n-2} \cdots x_{n-1} \tau_{\omega_0}$, and $\omega_0$ is the longest element in $S_n$.

 If $i\in I^\im$, then the one-dimensional trivial  module is the unique irreducible graded $R(ni)$-module.

\textbf{Case 2}:  $i \in I^\re \cap I_{\od}$.

The  superalgebra $R(ni)$ is  the odd nil-Hecke algebra $ONH_n$ considered in \cite{EKL2014}, which has a basis $\{x_1^{u_1}\cdots x_n^{u_n}\tau_\omega\mid \omega\in S_n,u_1,\dots,u_n\geq 0\}$.
The odd polynomial algebra $OP_n = \mathbb{C}\left<x_1, \dots, x_n\right>/\left<x_ix_j=-x_jx_i,i\neq j\right>$ can be identified with the subalgebra of $ONH_n$ generated by $x_1, \dots, x_n$. For $n\geq 2$, the center $Z(ONH_n)$ consists of the symmetric polynomials in $x_1^2, \dots, x_n^2$.

Up to isomorphism and degree/parity   shifts, $ONH_n$ has a unique irreducible graded supermodule $V(i^n)$, which may have $(q,\pi)$-dimension   $[n]_i^\pi!$. Its  projective cover is given by \begin{equation}\label{podd}P_{i^{(n)}}= ONH_ne_{i,n}\left\{\binom{n}{2} \cdot r_i\right\},\end{equation}
 where $e_{i,n} = (-1)^{\binom{n}{3}}x_1^{n-1} x_2^{n-2} \cdots x_{n-1} \tau_{\omega_0}$ for the certain reduced expression
\begin{equation}\label{om0}\omega_0=s_1(s_2s_1)\cdots(s_{n-1}\cdots s_1).\end{equation}
Since $e_{i,n}$ is homogeneous in parity, the simple module $V(i^n)$ is of type $\mathtt M$.

\begin{remark}
Note that   $\mathscr{P}_{ni}=\mathscr P$ in this case. We identify $OP_n$ with the subalgebra of $\mathscr P$ generated by $c_1y_1,\dots,c_ny_n$, via the correspondence $x_j\mapsto c_jy_j$.  Under this identification, $ONH_n$ acts on $OP_n$  by $x_k\cdot f=x_kf$ and   $\tau_k\cdot f=\frac{1}{2}(c_k -c_{k+1}) \sigma_kf$.  Thus,
\begin{align*}
& \tau_k(x_k)=\tau_k(x_{k+1})=1,\ \tau_k(x_j)=0\ \text{for}\ j \neq k,k+1,\\
& \tau_k (fg)=1/2(c_k -c_{k+1})\sigma_k(fg)=1/2(c_k -c_{k+1})\big(\sigma_k(f)g+s_k(f)\sigma_k(g)\big)\\
& \phantom{\tau_k (fg)}=\tau_k (f)g+s_k^-(f)\tau_k(g),
\end{align*}
where $ {s}_k^-f(x_1,\dots,x_n)=f(-x_1,\dots,-x_{k+1},-x_k,\dots,-x_n)$. Therefore, $ONH_n$ acts on $OP_n$  via the odd divided difference operators defined in  \cite[2.1.1]{EKL2014}. In particular,  let $\omega_0$ be as in (\ref{om0}), then by \cite[Proposition 3.6]{EKL2014}, we have
$$\tau_{\omega_0}(x_1^{n-1} x_2^{n-2} \cdots x_{n-1})=(-1)^{\binom{n}{3}}.$$
\end{remark}

\textbf{Case 3}: $i \in I^\im \cap I_{\od}$.

\begin{proposition}\label{S2}{\it
$R(ni)$ has a basis $\{x_1^{u_1}\cdots x_n^{u_n}\tau_\omega\mid \omega\in S_n,u_1,\dots,u_n\geq 0\}$.}
\begin{proof}
We show that these elements act on $\Pol_{ni}$ $(=\mathscr P)$ linearly independently. Suppose that we have a non-trivial linear combination
$\sum_{\omega;u_1,\dots,u_n}k_{\omega;u_1,\dots,u_n}x_1^{u_1}\cdots x_n^{u_n}\tau_\omega$ acts on $\Pol_{ni}$ by zero. Choose an element $\omega$ of minimal length  such that $k_{\omega;u_1,\dots,u_n}\neq 0$ for some $u_1,\dots,u_n$. Write $\omega_0=\omega\omega'$, and apply the linear combination to the element
$$\tau_{\omega'}\cdot\big((c_1z_1)^{n-1}(c_2z_2)^{n-2}\cdots (c_{n-1}z_{n-1})\big).$$
This yields
$$ \textstyle{\sum_{u_1,\dots,u_n}}\pm k_{\omega;u_1,\dots,u_n}x_1^{u_1}\cdots x_n^{u_n}=0,$$
which implies $k_{\omega;u_1,\dots,u_n}=0$ for all $u_1,\dots,u_n\geq 0$, a contradiction.
\end{proof}
\end{proposition}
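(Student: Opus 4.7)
The plan is to establish the stated set is both spanning and linearly independent in $R(ni)$. For spanning, I would proceed by induction on word length in the generators $x_k,\tau_k$. Since $i=j\in I^\im\cap I_\od$ falls in the ``otherwise'' case of relation (5), every $\tau_k$ can be pushed past an $x_l$ with only a sign (and, when $l\in\{k,k+1\}$, an index swap), with no correction terms appearing. The dot-dot exchange $\SSL{i}{i}=-\SSR{i}{i}$ lets me then collect every $x$-generator on the left in the fixed order $x_1^{u_1}\cdots x_n^{u_n}$. The remaining product in the $\tau$'s is handled by the nilpotency $\tau_k^2=0$ (from relation (2) with $i=j$) and the unobstructed braid relation (6) (which applies here since (\ref{braid}) has nontrivial right-hand side only for $i\in I^\re$), reducing it via the standard nilHecke-style argument to $\pm\tau_\omega$ for some $\omega\in S_n$ or to $0$.

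For linear independence, I act on $\Pol_{ni}=\mathscr{P}$ via Proposition~\ref{P}. Suppose $\sum_{\omega,\mathbf{u}}k_{\omega;\mathbf{u}}\,x_1^{u_1}\cdots x_n^{u_n}\tau_\omega=0$, let $\omega$ be of minimal length with some $k_{\omega;\mathbf{u}}\ne 0$, and write $\omega_0=\omega\omega'$ with $\omega_0\in S_n$ the longest element. Apply the relation to the test element
\[
\Phi\ :=\ \tau_{\omega'}\cdot\bigl((c_1z_1)^{n-1}(c_2z_2)^{n-2}\cdots(c_{n-1}z_{n-1})\bigr).
\]
By minimality only $\omega''$ with $\ell(\omega'')\ge\ell(\omega)$ contribute; for any such $\omega''\ne\omega$ one has $\ell(\omega''\omega')<\ell(\omega'')+\ell(\omega')$ (equality would force $\omega''\omega'=\omega_0$, hence $\omega''=\omega$), so $\tau_{\omega''}\tau_{\omega'}$ corresponds to a non-reduced word and vanishes after braid reduction using $\tau_k^2=0$. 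Only the term $\tau_\omega\tau_{\omega'}=\pm\tau_{\omega_0}$ survives.

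It then remains to evaluate $\tau_{\omega_0}\cdot\bigl((c_1z_1)^{n-1}\cdots(c_{n-1}z_{n-1})\bigr)\in\mathscr{P}$. Since $\tau_k$ acts as $\tfrac{1}{2}(c_k-c_{k+1})\sigma_k'$ and $\sigma_k'$ acts on the $z$-variables by the same formula that $\sigma_k$ uses on the $y$-variables, this calculation inside $\mathbb{C}\langle c_1z_1,\dots,c_nz_n\rangle$ reproduces the odd nilHecke computation of \cite[Proposition~3.6]{EKL2014} verbatim, yielding the nonzero scalar $(-1)^{\binom{n}{3}}$. Left-multiplication by $x_1^{u_1}\cdots x_n^{u_n}$, acting as $(c_1y_1)^{u_1}\cdots(c_ny_n)^{u_n}$, turns the surviving identity into $\sum_{\mathbf{u}}\pm k_{\omega;\mathbf{u}}\,(c_1y_1)^{u_1}\cdots(c_ny_n)^{u_n}=0$ in $\mathscr{P}$, and the linear independence of these monomials forces every $k_{\omega;\mathbf{u}}=0$, contradicting the choice of $\omega$.

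The main obstacle is rigorously establishing $\tau_{\omega''}\tau_{\omega'}=0$ whenever $\omega''\ne\omega$, using only (6) and $\tau_k^2=0$---the combinatorial heart of the nilHecke-type basis theorem---together with careful sign bookkeeping through the odd nilHecke reduction in the $z$-variables. The spanning step and the concluding linear-independence check for monomials in $\mathscr{P}$ are then essentially mechanical.
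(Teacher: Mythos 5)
Your proof is correct and follows essentially the same route as the paper: act on the polynomial representation $\Pol_{ni}$, pick $\omega$ of minimal length with a nonzero coefficient, and apply the combination to $\tau_{\omega'}\cdot\bigl((c_1z_1)^{n-1}\cdots(c_{n-1}z_{n-1})\bigr)$ where $\omega_0=\omega\omega'$, so that only $\pm\tau_{\omega_0}$ survives and evaluates to the nonzero scalar $(-1)^{\binom{n}{3}}$ via the odd nilHecke computation. You additionally spell out the spanning (straightening) step and the vanishing of non-reduced $\tau$-words, which the paper leaves implicit.
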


\begin{proposition}\label{S3}{\it
For $n\geq 2$, the center $Z(R(ni))=\C[x_1^2,\dots,x_n^2]^{S_n}$.}
\begin{proof}
It is obvious that $\C[x_1^2,\dots,x_n^2]^{S_n}\subset Z(R(ni))$.  Let $z=\sum_{\omega\in S_n}f_\omega\tau_\omega$ be a center element. Assume that $\omega\neq 1$ with $f_\omega\neq 0$, then there exists $k\in \{1,\dots,n\}$ such that $\omega (k)\neq k$. But this implies $x_kz-zx_k=\sum_{\omega\in S_n}f_\omega(x_k-(-1)^{l(\omega)}x_{\omega(k)})\tau_\omega\neq 0$. Thus $z\in OP_n$. Since $Z(OP_n)=\C[x_1^2,\dots,x_n^2]$, we write $z=\sum_{a,b\geq 0}h_{ab}x_1^{2a}x_2^{2b}$ for some $h_{ab}\in \C[x_3^2,\dots,x_n^2]$.  Now $\tau_1z=z\tau_1$ implies $p_{ab}=p_{ba}$ for each $a,b$. Hence $z$ is symmetric in $x_1$ and $x_2$. Similarly, we can show that $z$ is symmetric in $x_k$ and $x_{k+1}$ for all $1\leq k\leq n-1$.
\end{proof}
\end{proposition}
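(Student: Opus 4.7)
The approach is to exploit the PBW-type basis for $R(ni)$ from Proposition \ref{S2}, reducing in two stages: first showing that any central element has no $\tau_\omega$-contribution for $\omega\neq 1$, and then characterizing the central elements of $OP_n$. For the easy inclusion $\C[x_1^2,\dots,x_n^2]^{S_n}\subseteq Z(R(ni))$, the slide relations $\tau_k x_k=-x_{k+1}\tau_k$ and $\tau_k x_{k+1}=-x_k\tau_k$ (the specializations for $i=j\in I^{\im}\cap I_{\od}$ of the relation $\LU=(-1)^{p(i)p(j)}\RD$) combined with the faraway anticommutation $\tau_k x_j=-x_j\tau_k$ for $j\neq k,k+1$ show that each $x_j^2$ centralizes every generator, so every $S_n$-symmetric polynomial in the $x_j^2$ lies in $Z(R(ni))$.

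For the reverse inclusion, I would write $z=\sum_\omega f_\omega\tau_\omega$ with $f_\omega\in OP_n$ using Proposition \ref{S2}. The unified relation $\tau_k x_j=-x_{s_k(j)}\tau_k$ iterates to $\tau_\omega x_k=(-1)^{l(\omega)}x_{\omega(k)}\tau_\omega$ for every reduced $\omega$. Expanding both $x_kz$ and $zx_k$ in the PBW basis, centrality yields, in each $\tau_\omega$-component, the identity
\[
x_k f_\omega=(-1)^{l(\omega)} f_\omega x_{\omega(k)} \quad \text{in}\ OP_n.
\]
If $\omega\neq 1$, I pick $k$ with $\omega(k)\neq k$. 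Since $OP_n$ is a twisted polynomial domain with a standard monomial basis, the supports of $x_k f_\omega$ and $f_\omega x_{\omega(k)}$ are the translates $\mathrm{supp}(f_\omega)+e_k$ and $\mathrm{supp}(f_\omega)+e_{\omega(k)}$ respectively. Equality of these two sides forces the finite set $\mathrm{supp}(f_\omega)$ to be invariant under translation by the nonzero vector $e_k-e_{\omega(k)}$, which is impossible unless $f_\omega=0$.

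Having reduced to $z\in OP_n$, the same monomial comparison applied to the commutation $x_kz=zx_k$ forces every exponent in every monomial appearing in $z$ to be even, so $z\in\C[x_1^2,\dots,x_n^2]$. Then $\tau_k z=z\tau_k$, after commuting $\tau_k$ past the even-degree factors using $\tau_k x_j^{2c}=x_{s_k(j)}^{2c}\tau_k$, gives $s_k(z)=z$ for each adjacent transposition, hence $z\in\C[x_1^2,\dots,x_n^2]^{S_n}$. The main obstacle I anticipate is the sign bookkeeping needed to establish the uniform iterated slide $\tau_\omega x_k=(-1)^{l(\omega)}x_{\omega(k)}\tau_\omega$: each of the three local relations (slide, faraway, and braid) contributes a $-1$, and these must combine cleanly under iteration, with the ambiguity of $\tau_\omega$ up to $\pm 1$ being harmless because both sides of the key identity absorb the same sign. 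Once this identity is in place, the monomial-support argument dispatches the reduction to $OP_n$ elegantly, and the remaining analysis is routine.
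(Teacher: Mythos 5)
Your argument follows exactly the paper's two-stage route: use the basis of Proposition~\ref{S2} to kill every $\tau_\omega$-component ($\omega\neq 1$) of a central element by commuting with the $x_k$, then analyse the surviving element of $OP_n$. Your first stage is in fact \emph{more} complete than the paper's, which merely asserts that $x_kz-zx_k\neq 0$: the componentwise identity $x_kf_\omega=(-1)^{l(\omega)}f_\omega x_{\omega(k)}$ together with the observation that it would force $\mathrm{supp}(f_\omega)$ to be invariant under translation by the nonzero vector $e_k-e_{\omega(k)}$ is the right justification, and the iterated slide $\tau_\omega x_k=(-1)^{l(\omega)}x_{\omega(k)}\tau_\omega$ (with the $\pm$ ambiguity of $\tau_\omega$ cancelling on both sides) is correct. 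A minor slip in your easy inclusion: $x_j^2$ does \emph{not} centralize $\tau_j$, since $\tau_jx_j^2=x_{j+1}^2\tau_j$; only the $S_n$-symmetric polynomials in the $x_j^2$ are central, which is what you actually need.

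The genuine gap is in your second stage, and it is shared with the paper's own proof. Commutation $x_kz=zx_k$ for all $k$ does \emph{not} force all exponents to be even. Left multiplication of a monomial $x^u$ by $x_k$ produces the sign $(-1)^{\sum_{j<k}u_j}$ and right multiplication the sign $(-1)^{\sum_{j>k}u_j}$, so the condition is that every exponent of every monomial be congruent mod $2$ to the total degree; for $n$ odd this admits monomials with all exponents odd. Concretely, $x_1x_2\cdots x_n$ commutes with every $x_k$ when $n$ is odd, and since $\tau_kx_j=-x_{s_k(j)}\tau_k$ for every $j$ one gets $\tau_k(x_1\cdots x_n)=(-1)^{n+1}(x_1\cdots x_n)\tau_k=(x_1\cdots x_n)\tau_k$ for odd $n$, so $x_1\cdots x_n$ is genuinely central in $R(ni)$ for odd $n\geq 3$. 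Thus your deduction ``hence $z\in\C[x_1^2,\dots,x_n^2]$'' fails for odd $n$ — and so does the paper's appeal to ``$Z(OP_n)=\C[x_1^2,\dots,x_n^2]$'', which is false for odd $n$ for the same reason. Both proofs are valid only for even $n$; for odd $n$ the statement as written (ordinary, non-super centre) needs either to be read as the supercentre (which excludes the odd element $x_1\cdots x_n$, as it fails to \emph{anti}commute with $x_1$) or to be enlarged by $x_1\cdots x_n\cdot\C[x_1^2,\dots,x_n^2]^{S_n}$.
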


If $i\cdot i<0$, then $R(ni)$  has a unique maximal left graded superideal, namely
 $R(ni)_{>0}$. Consequently, it has a unique irreducible graded supermodule $V(i^n)$, which is the one-dimensional trivial module. It is clear  that $V(i^n)$ is of type $\mathtt M$   as a supermodule.

If $i\cdot i=0$, then the unique maximal left graded superideal of $R(ni)$ is spanned by $$  \{x_1^{u_1}\cdots x_n^{u_n}\tau_\omega\mid \omega\in S_n,u_1,\dots,u_n\geq 0\}\setminus \{1\}. $$
Hence the one-dimensional trivial module is again the unique irreducible graded supermodule; we denote it by $V(i^n)$ as well.

\subsection{Basis and center of $R(\nu)$}\

\vskip 2mm

Let $\ii,\jj\in \nu$. Using the polynomial representation $\mathscr P_{\nu}$ of $R(\nu)$, one can  obtain by a similar argument in \cite[Theorem 2.5]{KL2009} that $\mathscr P_{\nu}$ is a faithful  $R(\nu)$-module and  $_{\ii}R(\nu)_{\jj}$ has a basis $$\{x_{1,\ii}^{u_1}\cdots x_{n,\ii}^{u_n}\cdot\widehat\omega_\jj\mid u_1,\dots,u_n\geq 0,\ \omega\in S_n\ \text{such that} \ \omega(\jj)=\ii\},$$ where $\widehat\omega_\jj\in {_{\ii}} R(\nu)_{\jj}$ is uniquely determined by a fixed  reduced expression of $\omega$.  If the reduced expression for $\omega$ is $s_{k_1}\cdots s_{k_r}$, then $$\widehat\omega_\jj=\tau_{k_1,s_{k_2}\cdots s_{k_r}(\jj)}\cdots \tau_{k_{r-1},s_{k_r}(\jj)}\tau_{k_r,\jj}.$$

Assume $\Seq(\nu)$ contains a sequence ${i_1}^{m_1}\cdots{i_t}^{m_t}$ such that ${i_1},\dots,{i_t}$ are all distinct. Similar to \cite[Theorem 2.9]{KL2009}, the center $Z(R{(\nu)})$ can be  described as $$Z(R(\nu))\cong \bigotimes^t_{k=1}\C[\chi_{1}^{1+p(i_k)},\dots,\chi_{m_k}^{1+p(i_k)}]^{S_{m_k}}.$$

\vskip 5mm

\section{\textbf{Categorification of quantum Borcherds superalgebras}}

\subsection{Grothendieck groups of graded $R(\nu)$-supermodules}\

\vskip 2mm

Let $R(\nu)$-$\Mod$ denote the category of finitely generated graded $R(\nu)$-supermodules, whose morphisms are the degree-zero even homomorphisms:
$$\HOM_{R(\nu)}(M,N)_0^{\ev}\ \ \tx{for}\ M,N\in R(\nu)\tx{-}\Mod.$$

Let $R(\nu)$-$\fMod$ (resp. $R(\nu)$-$\pMod$) be the full subcategory of $R(\nu)$-$\Mod$ consisting  of finite-dimensional (resp. finitely generated projective) graded $R(\nu)$-supermodules.

Since $R(\nu)$  is Laurentian, Lemma \ref{irr super}(1) implies that there are only finitely many irreducible graded $R(\nu)$-supermodules up to isomorphism and degree/parity shifts. All such modules are finite-dimensional, and they remain irreducible as $R(\nu)$-supermodules after forgetting the $\Z$-grading.

Let $\B_{\nu}$ be the set of equivalence classes (under isomorphism and degree/parity   shifts) of  irreducible graded $R(\nu)$-supermodules. Choose one representative $S_b$ for
each equivalence class and denote by $P_b$ the projective cover of $S_b$.

The Grothendieck group $G_0(R(\nu))$ (resp. $K_0(R(\nu))$) of $R(\nu)$-$\fMod$ (resp. $R(\nu)$-$\pMod$) are free $\Z^\pi[q,q^{-1}]$-modules with a basis $\{[S_b]\}_{b\in \B_{\nu}}$ (resp. $\{[P_b]\}_{b\in \B_{\nu}}$) (see Remark \ref{type} below). Here the actions of $q$ and $\pi$ are given by
$$q[M]=[M\{1\}],\ \ \pi[M]=[\Pi M].$$
Let $R=\bigoplus_{\nu\in \N[I]}R(\nu)$  and form
$$G_0(R)=\bigoplus_{\nu\in \N[I]}G_0(R(\nu)),\ \ K_0(R)=\bigoplus_{\nu\in \N[I]}K_0(R(\nu)).$$

\begin{remark}\label{type} ({\bf Type $\mathtt M$ phenomenon}) \
It's necessary to show that all the irreducible graded $R(\nu)$-supermodules are type $\mathtt M$. Otherwise, the Grothendieck groups $G_0(R)$ and $K_0(R)$ are not free $\Z^\pi[q,q^{-1}]$-modules; in particular, one cannot naturally identify $K_0(R\otimes R)$  with $K_0(R)\otimes K_0(R)$, nor $G_0(R\otimes R)$ with $G_0(R)\otimes G_0(R)$  (see \cite[Lemma 12.2.13]{K2005}).

By Lemma \ref{irr super}(4), this is equivalent to proving that every irreducible graded $|R|$-module $V$ satisfies $V\cong V^\varsigma$.   An effective method to establish this is provided in \cite[Theorem 6.4]{KKO2013}. Using the framework developed in \cite[Section 2]{TW2025}, one can construct the Kashiwara operators on the category of irreducible graded $|R|$-modules and then applies \cite[Theorem~6.4]{KKO2013} to conclude the claim.
\end{remark}

The $\Z^\pi[q,q^{-1}]$-module $K_0(R)$ (resp. $G_0(R)$) is a twisted bialgebra whose product and coproduct are induced by the induction and restriction functors:
\begin{equation*}
\begin{aligned}
& \Ind^{\nu+\nu'}_{\nu,\nu'}\colon R(\nu)\otimes R(\nu')\text{-}\Mod\rightarrow R(\nu+\nu')\text{-}\Mod,\ M\mapsto R(\nu+\nu')1_{\nu,\nu'}\otimes_{R(\nu)\otimes R(\nu')}M,\\
&\Res^{\nu+\nu'}_{\nu,\nu'}\colon R(\nu+\nu')\text{-}\Mod\rightarrow R(\nu)\otimes R(\nu')\text{-}\Mod,\ N\mapsto 1_{\nu,\nu'}N,
\end{aligned}
\end{equation*}
where $1_{\nu,\nu'}=1_\nu\otimes 1_{\nu'}$.

More precisely, assume $\ii=i_1\dots i_n\in\Seq(\nu)$.  For $x\in R(\nu)\text{-}\Mod$, we set $$|x|:=\nu \in \N[I],\quad p(x)=p(\nu):=p(i_1)+\cdots+p(i_n).$$
Due to Remark \ref{type}, we may identify $K_0(R\otimes R)$ (resp. $G_0(R\otimes R)$) with $K_0(R)\otimes K_0(R)$ (resp. $G_0(R)\otimes G_0(R)$).  We equip $K_0(R)\otimes K_0(R)$ (resp. $G_0(R)\otimes G_0(R)$) with the twisted algebra structure given by
$$(x_1\otimes x_2)(y_1\otimes y_2)=\pi^{p(x_2)p(y_1)}q^{-|x_2|\cdot |y_1|}x_1y_1\otimes x_2y_2,$$
With this structure, the restriction map $\Res$ becomes an algebra homomorphism by the following Mackey-type theorem.

\begin{proposition}\label{Mackey}\cite[Proposition 2.18]{KL2009}, \cite[Theorem 6.3]{HW2015} \ {\it
Let $\nu,\nu',\mu,\mu'\in \N[I]$ with $\nu+\nu'=\mu+\mu'$. For $M\in R(\mu)\text{-}\Mod, N\in R(\mu')\text{-}\Mod$, we have a filtration of  $\Res_{\nu,\nu'}\Ind_{\mu,\mu'}M\otimes N$ with subquotients over all $\lambda\in \N[I]$ such that $\nu-\lambda, \mu'-\lambda, \nu'+\lambda-\mu' \in\N[I]$, which are evenly isomorphic to
$$\Pi^{p(\lambda)p(\nu'+\lambda-\mu')}\Ind^{\nu,\nu'}_{\nu-\lambda,\lambda,\nu'+\lambda-\mu',\mu'-\lambda} {}^\diamond(\Res^{\mu,\mu'}_{\nu-\lambda,\nu'+\lambda-\mu',\lambda,\mu'-\lambda}M\otimes N )\{-\lambda\cdot(\nu'+\lambda-\mu')\}.$$
Here if $\Res M\otimes N=Q_1\otimes Q_2\otimes Q_3\otimes Q_4$, then ${}^\diamond(\Res M\otimes N)=Q_1\otimes Q_3\otimes Q_2\otimes Q_4$.
}
\end{proposition}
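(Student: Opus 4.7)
The plan is to extend the classical Mackey filtration proof of \cite[Proposition~2.18]{KL2009} and its super refinement \cite[Theorem~6.3]{HW2015} to the Borcherds-Cartan superdatum setting. The overall combinatorial skeleton is unchanged; the new technical inputs to track are the dotted-crossing relation for $i\in I^{\im}\cap I_{\od}$ (Case~3 of Section~2.3), the signs appearing in (\ref{braid}) for $i\in I^{\re}\cap I_{\od}$, and the super-twisted sliding relations of Section~2.1.

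First I would decompose the $(R(\nu)\otimes R(\nu'))$-$(R(\mu)\otimes R(\mu'))$-bisupermodule $1_{\nu,\nu'} R(\nu+\nu') 1_{\mu,\mu'}$ using the basis theorem of Section~2.4. Every element is a $\C$-linear combination of $x^\alpha\widehat{\omega}_{\jj}$ with $\jj\in\Seq(\mu+\mu')$ of type $(\mu,\mu')$ and $\omega$ taking $\jj$ to a sequence of type $(\nu,\nu')$. Factoring $\omega=u\cdot\omega_\lambda\cdot v$ with $\omega_\lambda$ a minimal-length representative of its $(S_\nu\times S_{\nu'})\backslash S_{\text{ht}(\nu+\nu')}/(S_\mu\times S_{\mu'})$ double coset, $u\in S_\nu\times S_{\nu'}$, $v\in S_\mu\times S_{\mu'}$, the double cosets are parametrized by splitting matrices $\bigl(\begin{smallmatrix}\nu-\lambda & \lambda\\ \nu'+\lambda-\mu' & \mu'-\lambda\end{smallmatrix}\bigr)$, yielding precisely the stated range of $\lambda$.

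Next, I would order the $\omega_\lambda$ by length and define $F_k$ to be the span of those $\widehat{\omega_\lambda}\otimes(m\otimes n)$ with $\ell(\omega_\lambda)\le k$. The local relations of Section~2.1 rewrite any left multiplication by a generator of $R(\nu)\otimes R(\nu')$ as a term preserving $\omega_\lambda$ plus corrections indexed by strictly shorter permutations, so $\{F_k\}$ is an $R(\nu)\otimes R(\nu')$-submodule filtration of $\Res_{\nu,\nu'}\Ind_{\mu,\mu'}(M\otimes N)$. The $\lambda$-th subquotient is then given by the map $m\otimes n\mapsto\widehat{\omega_\lambda}\otimes(m\otimes n)$, which by the bimodule basis is an even isomorphism onto the displayed module. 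The degree of $\widehat{\omega_\lambda}$ equals $\sum -i\cdot j$ over its crossings, summing to $-\lambda\cdot(\nu'+\lambda-\mu')$; the parity shift $\Pi^{p(\lambda)p(\nu'+\lambda-\mu')}$ records the super-transposition of the middle two blocks of sizes $\lambda$ and $\nu'+\lambda-\mu'$ implicit in the ${}^\diamond$ rearrangement, via the sign rule of the super-tensor product.

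The main obstacle is confirming that the correction terms produced by (\ref{braid}) and by the dotted-crossing relation of Case~3---both of which carry new signs in the super setting---indeed belong to lower pieces of $F_\bullet$, so that the filtration is well-defined as a filtration of $R(\nu)\otimes R(\nu')$-supermodules. Rather than bookkeeping signs by hand, I would pull the entire verification back to the faithful polynomial representation $\Pol_\nu$ of Proposition~\ref{P}: after translating each bimodule identity into an identity of explicit operators built from $\sigma_k,\sigma_k',s_k,\widetilde s_k,\overset{\approx}{s}_k$ on a tensor product of super-polynomial algebras, all signs are automatically fixed by the definition of the action, and the length-reduction property follows by induction on the length of the permutations involved.
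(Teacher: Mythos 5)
Your proposal is correct and follows the standard Mackey-filtration argument; the paper itself gives no proof of Proposition \ref{Mackey}, deferring entirely to \cite[Proposition 2.18]{KL2009} and \cite[Theorem 6.3]{HW2015}, and your sketch is essentially a reconstruction of that argument (double cosets indexed by $\lambda$, filtration by length of minimal representatives, degree $-\lambda\cdot(\nu'+\lambda-\mu')$ and parity $p(\lambda)p(\nu'+\lambda-\mu')$ read off from the crossings of $\widehat{\omega_\lambda}$) adapted to the Borcherds--Cartan superdatum. Your use of the faithful polynomial representation to verify that the correction terms from the new relations fall into lower filtration pieces is a legitimate, if slightly roundabout, way to handle the sign bookkeeping.
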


\subsection{Categorification of quantum Serre relations and bilinear forms}\

\vskip 2mm

For $\ii\in\Seq(\nu)$,  we set
$$P_{\ii}=R(\nu)1_{\ii},$$
which is a  projective  graded $R(\nu)$-supermodule. Recall that for $i\in I^\re$, we define in (\ref{peven}) and (\ref{podd}) the projective graded supermodule
$$P_{i^{(n)}}= R(ni)e_{i,n}\left\{\binom{n}{2} \cdot r_i\right\}.$$

\begin{proposition}\label{Serre}
{\it Assume $i\in I^\re$, $j\in I$ and $i\neq j$. Let $n\in \Z_{>0}$ and $m=1-na_{ij}$. We have an evenly isomorphism of graded $R(mi+nj)$-supermodules
$$\bigoplus^{\lfloor \frac{m}{2} \rfloor}_{c=0}\Pi^{p(2c;i,j;n)}P_{i^{(2c)}j^ni^{(m-2c)}}\simeq\bigoplus^{\lfloor \frac{m-1}{2} \rfloor}_{c=0}\Pi^{p(2c+1;i,j;n)}P_{i^{(2c+1)}j^ni^{(m-2c-1)}}.$$
Here, $P_{i^{(a)}j^n i^{(b)}}=\Ind (P_{i^{(a)}}\otimes P_{j^n}\otimes P_{i^{(b)}})$.
Moreover, for $i,j\in I$ and $i\cdot j=0$, we have an evenly isomorphism
$$P_{ij}\simeq \Pi^{p(i)p(j)}P_{ji}.$$}
\begin{proof}
The proof is identical to the "Box" calculations in \cite{KL2011} and \cite{HW2015}. We leave the details to the reader.
\end{proof}
\end{proposition}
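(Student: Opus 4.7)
The plan is to adapt the ``box'' calculation from \cite[Sections 2.2--2.3]{KL2011}, upgraded to the super setting as in \cite[Section 6.3]{HW2015}. The key observation is that each divided-power projective $P_{i^{(n)}}$ is realized as a direct summand of $P_{i^n}=R(ni)1_{i^n}$ via the idempotent $e_{i,n}$ (with a prescribed degree shift), so any induction of a tensor product of such projectives is, up to explicit degree and parity shifts, a direct summand of a suitable $P_{i^aj^ni^b}$. All isomorphisms will therefore be constructed out of braid-like diagrams acting on these ``boxed'' generators, and the claim reduces to verifying the corresponding diagrammatic identities from the defining local relations of $R(\nu)$.

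I would first dispose of the easy case $i\cdot j=0$. The crossing $\tau_{1,ij}$ defines a morphism $P_{ij}\to P_{ji}$ of degree $-i\cdot j=0$ and parity $p(i)p(j)$, so after shifting the target by $\Pi^{p(i)p(j)}$ it becomes an even morphism. Because $Q_{ij}=1$ in this case, the double-crossing relation collapses to $\tau_{1,ji}\tau_{1,ij}=1_{ij}$, which supplies a two-sided inverse and proves the second claim.

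For the quantum Serre isomorphism I would ``push'' the block of $n$ parallel $j$-strands through the separating $i$-strands. For each $c$ I construct diagrammatic morphisms
$$\phi_c\colon P_{i^{(2c)}j^ni^{(m-2c)}}\longrightarrow P_{i^{(2c+1)}j^ni^{(m-2c-1)}}$$
and matching maps $\psi_c$ in the opposite direction, by inserting an appropriate string of crossings between the $j$-block and the adjacent $i$-strand together with odd powers of dots; the latter are extracted from the explicit expansion of $\frac{Q_{ij}(u,v)-Q_{ij}(w,v)}{u-w}$, or of its odd analogue, appearing on the right-hand side of (\ref{braid}). A diagrammatic ``box-push'' lemma, proved from the super-interchange and crossing relations of Section~2.1, will then show that after bundling with the prescribed shifts $\Pi^{p(2c;i,j;n)}$ and $\Pi^{p(2c+1;i,j;n)}$ the totals $\bigoplus_c\phi_c$ and $\bigoplus_c\psi_c$ are mutually inverse. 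An induction on $n$ and on $|a_{ij}|$ reduces the general statement to a single model identity, exactly as in \cite{KL2011}.

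The main obstacle I expect is bookkeeping rather than anything conceptual. Each crossing contributes a Koszul sign $(-1)^{p(\cdot)p(\cdot)}$ from the local super-interchange, and the odd/even dichotomy of $i$ splits (\ref{braid}) into two subcases with different numerators. Matching these signs with the combinatorial parity $p(a;i,j;n)=anp(i)p(j)+\tfrac{1}{2}a(a-1)p(i)$ appearing in \eqref{serrepp} is the delicate step. The cleanest way to organize it is to verify once and for all that the generating map $\phi_c$ has degree $0$ and parity exactly $p(2c+1;i,j;n)-p(2c;i,j;n)$; the isomorphism then follows formally from the triangular cancellations already familiar from \cite{KL2011} and \cite{HW2015}.
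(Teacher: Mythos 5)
Your proposal follows exactly the route the paper intends: the paper's proof consists solely of deferring to the ``box'' calculations of \cite{KL2011} and \cite{HW2015}, and your sketch (realizing $P_{i^{(a)}}$ as a summand of $P_{i^a}$ via $e_{i,a}$, handling $i\cdot j=0$ by the degenerate double-crossing relation, and pushing the $j$-block through the $i$-strands while tracking the Koszul signs against $p(a;i,j;n)$) is precisely that argument adapted to the super setting. No discrepancy with the paper's approach.
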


Let $\psi:R(\nu)\rightarrow R(\nu)$ be the anti-involution of $R(\nu)$ by flipping the diagrams about horizontal axis. For $P\in R(\nu)$-$\pMod$, define $\overline{P}=\HOM_{R(\nu)}(P,R(\nu))^\psi$ to be the left graded $R(\nu)$-supermodule with the action twisted by $\psi$. We have the following isomorphism for all $k, r\in\Z$:
$$\overline{\Pi^r P\{k\}}\simeq \Pi^r  \overline P\{-k\}.$$

Define the $\Z^\pi[q,q^{-1}]$-bilinear pairing
$( \ , \ ):K_0(R(\nu))\times G_0(R(\nu))\rightarrow \Z^\pi[q,q^{-1}]$ by
$$([P],[M])=\gdim (P^\psi\otimes_{R(\nu)}M)=\gdim \HOM_{R(\nu)}(\overline{P},M).$$
Since all irreducible graded $R(\nu)$-supermodules are of type $\mathtt M$, the groups $G_0(R(\nu))$ and $K_0(R(\nu))$ are dual $\Z^\pi[q,q^{-1}]$-module under this pairing.

  There is also a symmetric $\Z^\pi[q,q^{-1}]$-bilinear form
  $$( \ , \ ):K_0(R(\nu))\times K_0(R(\nu))\rightarrow \Z^\pi ((q))$$
defined in the same manner. Similar to \cite[Proposition 3.3]{KL2009}, we have the following proposition.

 \begin{proposition}\label{Sy} {\it The  bilinear form  on $K_0(R)$  satisfies:
 \begin{itemize}
\item[(1)] $([P],[Q])=0$  \ for $P\in R(\nu)\text{-}\pMod$, $Q\in R(\mu)\text{-}\pMod$ with $\nu\neq \mu$,
\item[(2)] $(1,1) = 1$, where $1=\C$ as a module over $R(0)=\C$,
\item[(3)] $([P_{i}], [P_{i}]) =(1-\pi^{p(i)}q_i^2)^{-1}$ for all $i\in I$,
\item[(4)] $(x, yz) = (\Res(x), y \otimes z)$  \ for $x,y,z
\in K_0(R)$.
\end{itemize}}
\end{proposition}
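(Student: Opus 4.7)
The plan is to verify (1)--(4) in turn, with (4) being the substantive point and the others being essentially bookkeeping. Part (1) is immediate: for $\nu\neq\mu$ and $P\in R(\nu)\tx{-}\pMod$, $Q\in R(\mu)\tx{-}\pMod$, the tensor product $P^\psi\otimes_{R(\nu)}Q$ is zero, since $1_\nu$ acts as the identity on $P^\psi$ but as zero on $Q$. Part (2) is trivial because $R(0)=\C$ and the pairing reduces to $\C\otimes_\C\C\cong\C$ in bidegree $(0,\ev)$. For (3), $P_i=R(i)1_i$ is free of rank one over $\C[x_{1,i}]$ with $|x_{1,i}|=2r_i$ and $p(x_{1,i})=p(i)$, and $\psi$ fixes $x_{1,i}$; hence $P_i^\psi\otimes_{R(i)}P_i\simeq\C[x_{1,i}]$, whose graded superdimension is the geometric series $\sum_{n\geq 0}(\pi^{p(i)}q_i^2)^n=(1-\pi^{p(i)}q_i^2)^{-1}$.

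For (4) I reduce by bilinearity to the case $x=[N]$, $y=[P]$, $z=[Q]$ with $P\in R(\mu)\tx{-}\pMod$ and $Q\in R(\mu')\tx{-}\pMod$; by (1) I may further assume $|N|=\mu+\mu'$. Unfolding $yz=[\Ind_{\mu,\mu'}^{\mu+\mu'}(P\otimes Q)]$ and using associativity of the tensor product gives
\begin{align*}
(x,yz)
&=\gdim N^\psi\otimes_{R(\mu+\mu')}\bigl(R(\mu+\mu')1_{\mu,\mu'}\otimes_{R(\mu)\otimes R(\mu')}(P\otimes Q)\bigr)\\
&=\gdim (N^\psi 1_{\mu,\mu'})\otimes_{R(\mu)\otimes R(\mu')}(P\otimes Q).
\end{align*}
Since $\psi$ (flipping diagrams about a horizontal axis) fixes every idempotent $1_\ii$, it fixes $1_{\mu,\mu'}$, so $N^\psi 1_{\mu,\mu'}=(1_{\mu,\mu'}N)^\psi=(\Res^{\mu+\mu'}_{\mu,\mu'}N)^\psi$ as right $R(\mu)\otimes R(\mu')$-supermodules. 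Inserting this identification and applying the definition of the pairing on $K_0(R(\mu))\otimes K_0(R(\mu'))$ yields $(\Res(x),y\otimes z)$, using the identification $K_0(R\otimes R)\simeq K_0(R)\otimes K_0(R)$ established via the type-$\mathtt M$ phenomenon in Remark \ref{type}.

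The main care, and the only nontrivial point, is verifying that the pairing on $K_0(R)\otimes K_0(R)$ factors correctly: namely that $([P_1\otimes P_2],[M_1\otimes M_2])=([P_1],[M_1])\cdot([P_2],[M_2])$ in $\Z^\pi((q))$, with no spurious sign or power of $q$. This needs the sign rule built into $\HOM$ of graded supermodules and the definition of the tensor product action $a\otimes b\cdot(m\otimes n)=(-1)^{p(b)p(m)}am\otimes bn$ recorded in Section 1.1; tracing through these definitions on homogeneous generators gives the desired multiplicativity. Once this compatibility is in place, the computation above proves (4) without further subtlety, and the proposition follows.
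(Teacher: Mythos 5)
Your proof is correct and follows essentially the same route the paper intends: the paper gives no argument of its own but defers to \cite[Proposition 3.3]{KL2009}, whose proof is exactly your computation --- parts (1)--(3) by direct inspection of $\gdim$, and part (4) by associativity of the tensor product together with $N^\psi 1_{\mu,\mu'}=(\Res^{\mu+\mu'}_{\mu,\mu'}N)^\psi$, with the super-signs checked not to perturb graded dimensions. No gaps.
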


\subsection{Theorem of categorification}\

\vskip 2mm

Let $K_0(R)_{\Q^\pi(q)}=\Q^\pi(q)\otimes_{\Z^\pi[q,q^{-1}]}K_0(R)$. By (\ref{serrepp}) and Proposition \ref{Serre}, we have a well-defined bialgebra homomorphism
$$\Gamma: \HH\rightarrow K_0(R)_{\Q^\pi(q)}$$
given by $\Gamma (\theta_i)=[P_i]$ for all $i\in I$. By Proposition \ref{Sy}, the bilinear form $\{ \ , \ \}_\pi$ on $\HH$ and the form $( \ , \ )$ on $K_0(R)_{\Q^\pi(q)}$ take same values under $\Gamma$, i.e.,
$$(\Gamma(x),\Gamma (y))=\{x,y\}_\pi \ \ \text{for} \ x,y\in \HH.$$
Thus $\Gamma $ is injective by the non-degeneracy of $\{ \ , \ \}_\pi$ (see Proposition \ref{nondeg}). It induces an injective $\Z^\pi[q,q^{-1}]$-algebra homomorphism
$$\Gamma_{\mathcal A^\pi}:{_{\mathcal A^\pi} \HH}\rightarrow K_0(R).$$
Furthermore, $\Gamma_{\mathcal A^\pi}$ induces a $\Z[q,q^{-1}]$-algebra homomorphism
$$\Gamma_{\mathcal A^\pi}|_{\pi=1}:{_{\mathcal A^\pi} \HH}\big/(\pi-1)\rightarrow K_0(R)\big/(\pi-1)=K_0(R)\big/([\Pi P]-[P]),$$
where the left-hand side coincides with
${_{\mathcal A}\XX}$ by (\ref{A-form}), while the right-hand side coincides with
 $K_0(|R|)$ by Remark \ref{type}. Using a similar argument given in \cite{TW2023}, one can establish an isomorphism between ${_{\mathcal A}\XX}$ and $K_0(|R|)$, and  $\Gamma_{\mathcal A^\pi}|_{\pi=1}$ is precisely the same map obtained in that way.

Thus $\Gamma_{\mathcal A^\pi}|_{\pi=1}$ is an isomorphism, which implies the surjectivity of $\Gamma_{\mathcal A^\pi}$. We have shown the following:

\begin{theorem}\label{them:iso-}
{\it We have an isomorphism of $\Z^\pi[q,q^{-1}]$-bialgebras
$$\begin{aligned}
& \Gamma_{\mathcal A^\pi}:{_{\mathcal A^\pi} \HH}\xrightarrow{\sim} K_0(R) \\
&\phantom{\Gamma_{\mathcal A^\pi}:}\ \ \theta_i^{(n)}\mapsto [P_{i^{(n)}}]\qquad \text{for}\ i\in I^\re, n\in \Z_{> 0},\\
& \phantom{\Gamma_{\mathcal A^\pi}:}\ \ \ \theta_i\   \mapsto [P_i]\qquad \text{for}\ i\in I^\im.
\end{aligned}$$
Moreover, the specialization $\pi\mapsto -1$ induces an isomorphism
$$\Gamma_{\mathcal A^\pi}|_{\pi=-1}:{_{\mathcal A}\YY}  \left(\cong {_{\mathcal A^\pi} \HH}\big/(\pi+1)\right)\xrightarrow{\sim} K_0(R)\big/(\pi+1).$$
}
\end{theorem}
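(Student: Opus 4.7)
The plan is to construct $\Gamma$ as the natural bialgebra map $\HH \to K_0(R)_{\Q^\pi(q)}$, show it matches the bilinear forms (hence is injective), pass to the $\mathcal A^\pi$-form, and finally deduce surjectivity by reduction modulo $\pi-1$ to the previously established non-super categorification result of \cite{TW2023}.

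First, I would verify that the assignment $\theta_i \mapsto [P_i]$ extends to a well-defined $\Q^\pi(q)$-algebra homomorphism $\Gamma: \HH \to K_0(R)_{\Q^\pi(q)}$. This requires showing that the elements in (\ref{serrepp}) are sent to zero, which is exactly the content of Proposition \ref{Serre}: the Serre-type isomorphisms of induced projectives and the $q$-commutation of $P_{ij}$ and $\Pi^{p(i)p(j)}P_{ji}$ translate directly into the vanishing of the quantum Serre elements in $K_0(R)_{\Q^\pi(q)}$. That $\Gamma$ is a bialgebra homomorphism follows because multiplication in $K_0(R)$ is induced by $\Ind$ and the coproduct by $\Res$, with the twisted multiplication on $K_0(R) \otimes K_0(R)$ matching that on $\HHH \otimes \HHH$ via the Mackey-type filtration of Proposition \ref{Mackey}.

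Next, I would establish injectivity of $\Gamma$ by comparing bilinear forms. By Proposition \ref{Sy}, the form $(\ ,\ )$ on $K_0(R)_{\Q^\pi(q)}$ satisfies exactly the same axioms characterizing $\{\ ,\ \}_\pi$ on $\HHH$ (and hence on $\HH$, since the Serre elements lie in its radical). Uniqueness of such a form gives $(\Gamma(x), \Gamma(y)) = \{x, y\}_\pi$ for all $x, y \in \HH$. Combined with the non-degeneracy of $\{\ ,\ \}_\pi$ on $\HH$ established in Proposition \ref{nondeg}, this forces $\ker \Gamma = 0$. Restricting to integral forms then yields an injection $\Gamma_{\mathcal A^\pi}: {_{\mathcal A^\pi} \HH} \to K_0(R)$, where the image of $\theta_i^{(n)}$ is $[P_{i^{(n)}}]$ in view of (\ref{peven}) and (\ref{podd}).

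For surjectivity, I would specialize at $\pi = 1$. By (\ref{A-form}), $_{\mathcal A^\pi}\HH/(\pi-1) \cong {_{\mathcal A}\XX}$, and by Remark \ref{type}, all irreducible graded $R(\nu)$-supermodules are of type $\mathtt M$, so $K_0(R)/(\pi-1) \cong K_0(|R|)$. The induced map $\Gamma_{\mathcal A^\pi}|_{\pi=1}: {_{\mathcal A}\XX} \to K_0(|R|)$ agrees with the isomorphism constructed in \cite{TW2023} for the categorification of the quantum Borcherds algebra (since both are bialgebra maps determined by $f_i \mapsto [P_i]$ and $f_i^{(n)} \mapsto [P_{i^{(n)}}]$). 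An analogous computation at $\pi = -1$ shows the $\pi+1$ specialization is also surjective onto $K_0(R)/(\pi+1)$. Knowing $\Gamma_{\mathcal A^\pi}$ is injective with surjective specializations at both $\pi \mapsto \pm 1$, and using that $\Z^\pi[q,q^{-1}] = \Z[q,q^{-1}][\pi]/(\pi^2-1) \hookrightarrow \Z[q,q^{-1}] \oplus \Z[q,q^{-1}]$ via $(\pi \mapsto 1, \pi \mapsto -1)$, any element in the cokernel would have to vanish under both specializations, hence be zero. This establishes that $\Gamma_{\mathcal A^\pi}$ is an isomorphism, and the final assertion follows by applying the $\pi \mapsto -1$ specialization.

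The main obstacle I anticipate is the precise identification of $\Gamma_{\mathcal A^\pi}|_{\pi=1}$ with the map constructed in \cite{TW2023}. This reduces to checking that the categorical induction/restriction and the choice of idempotents $e_{i,n}$ match those used in the classical (non-super) setting, which requires carefully untangling the parity-shift conventions; once this identification is in place, the surjectivity argument propagates cleanly to the $\mathcal A^\pi$-form and then to the $\pi = -1$ specialization giving the categorification of $_{\mathcal A}\YY$.
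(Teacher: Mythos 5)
Your proposal tracks the paper's proof essentially step for step through most of the argument: $\Gamma$ is well defined on $\HH$ by Proposition \ref{Serre}, it is a bialgebra homomorphism via the Mackey filtration of Proposition \ref{Mackey}, it intertwines $\{\ ,\ \}_\pi$ with $(\ ,\ )$ by Proposition \ref{Sy} and hence is injective by Proposition \ref{nondeg}, and it restricts to an injection $\Gamma_{\mathcal A^\pi}$ on integral forms whose $\pi=1$ specialization is identified with the isomorphism ${_{\mathcal A}\XX}\xrightarrow{\sim}K_0(|R|)$ obtained by the method of \cite{TW2023}. (The identification issue you flag as the main obstacle is real, and the paper treats it just as briefly as you do.) The divergence is in the final surjectivity step: the paper deduces surjectivity of $\Gamma_{\mathcal A^\pi}$ from the $\pi=1$ isomorphism alone, whereas you require surjectivity of \emph{both} specializations $\pi\mapsto\pm1$ and then descend. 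Your descent is sound and in fact cleanly stated --- if $C$ is the cokernel, then $C=(\pi-1)C$ and $C=(\pi+1)C$ give $C=(\pi^2-1)C=0$ --- but it creates a new dependency.

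The gap is the claim that ``an analogous computation at $\pi=-1$ shows the $\pi+1$ specialization is also surjective.'' No such analogous computation is available: surjectivity of ${_{\mathcal A}\YY}\to K_0(R)/(\pi+1)$ is precisely part of the second assertion of the theorem being proved, there is no prior categorification of $\YY$ to invoke, and unlike $K_0(R)/(\pi-1)\cong K_0(|R|)$ the quotient $K_0(R)/(\pi+1)$ is not the Grothendieck group of an ordinary graded algebra to which the argument of \cite{TW2023} could be transplanted. As written, your surjectivity argument is therefore circular at the $\pi=-1$ input, and since surjectivity modulo $(\pi-1)$ alone does not formally force surjectivity of a map of $\Z^\pi[q,q^{-1}]$-modules, you cannot simply drop that input from your version of the descent. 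To repair the proof you must either adopt the paper's route (following \cite{HW2015}) of extracting surjectivity of $\Gamma_{\mathcal A^\pi}$ from the single specialization $\pi=1$, using the freeness of $K_0(R)$ on the classes of the type-$\mathtt M$ indecomposable projectives $[P_b]$, or give an independent argument for the $\pi=-1$ surjectivity; an appeal to symmetry between the two specializations does not suffice.
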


\vskip 5mm

\section{\textbf{Appendix}}

The following is parallel to \cite[Section 3]{Kas91}, which establishes the existence of the coproduct $\rho_-$ and the bilinear form $\{ \ , \ \}_-$ on the quantum Borcherds superalgebra $\YY$.

For $i\in I^{\tx{re}},n\geq k\geq 0$, define
$${\begin{bmatrix} n \\ k \end{bmatrix}}_i^-=\frac{[n]_i^-!}{[k]_i^-![n-k]_i^-!}.$$
Note that
$$[n]_i^-=(-1)^{\frac{n-1}{2}p(i)}[n]_{(-1)^{p(i)/2}q_i},\ \ [n]_i^- !=(-1)^{\frac{n(n-1)}{4}p(i)}[n]_{(-1)^{p(i)/2}q_i}!.$$
Here we use the notation $[k]_v=\frac{v^k-v^{-k}}{v-v^{-1}}$ for any integer $k$. 
By the binomial formula for $[k]_v$, one obtains
\begin{equation*}
{\begin{bmatrix} n+1 \\ k \end{bmatrix}}_i^-=q_i^{-k}{\begin{bmatrix} n \\ k \end{bmatrix}}_i^-+(-1)^{(n-k+1)p(i)}q_i^{n-k+1}{\begin{bmatrix} n \\ k-1 \end{bmatrix}}_i^-.
\end{equation*}

Given a Borcherds--Cartan super datum $(I,\widetilde A,\cdot)$, define the dual weight lattice
$$P^\vee=\Big(\bigoplus_{i\in I}\mathbb Z h_i\Big)\oplus \Big(\bigoplus_{i\in I}\mathbb Z d_i\Big)$$
and the Cartan subalgebra $\mathfrak h=\mathbb Q\otimes_{\mathbb Z}P^\vee$.
The weight lattice is defined to be
$$P=\{\lambda\in\mathfrak h^*\mid \lambda(P^\vee)\subset\mathbb Z\}.$$
For each $i\in I$, define the simple root $\alpha_i\in P$ by
$\alpha_j(h_i)=a_{ij}$ and $\alpha_j(d_i)=\delta_{ij}$. We identify $i\in I$ with $\alpha_i$, so that the positive root lattice
$Q_+=\bigoplus_{i\in I}\mathbb N\alpha_i$   is identified with $\N[I]$.
The set of dominant weights is
$$P^+=\{\Lambda\in P\mid \Lambda(h_i)\in \Z_{\geq 0}\  \tx{for}\ i\in I;\ \Lambda(h_i)\in 2\Z_{\geq 0} \ \tx{for}\ i\in I^{\tx{re}}\cap I_{\od} \}.$$

\begin{definition}\cite[Definition 2.7]{BKM1998}\  The {\it quantum Borcherds superalgebra} $U$ associated to  $(I,\widetilde A,\cdot)$ is the $\Q(q)$-algebra generated by $e_i,f_i$ $(i\in I)$ and $q^h$ $(h\in P^\vee)$, subject to
\begin{equation*}
\begin{aligned}
& q^0= 1,\quad q^hq^{h'}=q^{h+h'} \quad \text{for} \ h,h' \in P^{\vee}, \\
& q^h e_i q^{-h} = q^{\alpha_i(h)} e_i, \ \ q^h f_iq^{-h} = q^{-\alpha_i(h)} f_i\quad \text{for} \ i\in I, h \in P^{\vee}, \\
& \sum_{a+b=1-a_{ij}}(-1)^{a+p(a;i,j)}{\begin{bmatrix} 1-a_{ij} \\ a \end{bmatrix}}_i^- f_i^{a}f_jf_i^{b}=0 \quad \text{for} \ i\in I^\re,j\in I\ \text{and}\ i\neq j,\\
& \sum_{a+b=1-a_{ij}}(-1)^{a+p(a;i,j)}{\begin{bmatrix} 1-a_{ij} \\ a \end{bmatrix}}_i^- e_i^{a}e_je_i^{b}=0 \quad \text{for} \ i\in I^\re,j\in I\ \text{and}\ i\neq j,\\
& f_if_j-(-1)^{p(i)p(j)}f_jf_i= e_ie_j-(-1)^{p(i)p(j)}e_je_i= 0 \quad \text{for}\ i,j\in I\ \text{with}\ a_{ij}=0,\\
& e_if_j-(-1)^{p(i)p(j)}f_je_i= \delta_{ij}\frac{K_i-K_i^{-1}}{q_i-q_i^{-1}} \quad \text{for}\ i,j\in I,\   \tx{where}\ K_i=q_i^{h_i}.
\end{aligned}
\end{equation*}
\end{definition}

Let $U^+$ (resp. $U^-$, resp. $U^0$) be the subalgebra of $U$ generated by $e_i$ (resp. $f_i$, resp. $q^h$).  By \cite[Theorem 2.23]{BKM1998}, we have the triangular decomposition $U\cong U^+\otimes U^0\otimes U^-$. Moreover, $U^-$ coincides with $\YY$.

For any $i\in I, x\in U^-$, there exist unique $e_i'(x),e_i''(x)\in U^-$ such that
$$e_ix-(-1)^{p(i)p(x)}xe_i=\frac{K_ie_i''(x)-K_i^{-1}e_i'(x)}{q_i-q_i^{-1}}.$$
The operators $e_i', e_i'', f_i$ (which represent left multiplication on  $U^-$) satisfy the following relations for all $i,j\in I$:
\begin{equation}\label{B1}
e_i' f_j = \delta_{ij} + (-1)^{p(i)p(j)} q_i^{-a_{ij}} f_j e_i',
\end{equation}
\begin{equation}\label{B2}
e_i'' f_j = \delta_{ij} + (-1)^{p(i)p(j)} q_i^{a_{ij}} f_j e_i'',
\end{equation}

\begin{equation}\label{B3}
e_i' e_j''= (-1)^{p(i)p(j)} q_i^{a_{ij}} e_j'' e_i'.
\end{equation}

\begin{definition} \  The {\it quantum Boson superalgebra} $\mathscr B$ associated to  $(I,\widetilde A,\cdot)$ is the $\Q(q)$-algebra generated by $e_i', f_i$ $(i\in I)$, subject to
\begin{equation*}
\begin{aligned}
&  e_i'f_j=\delta_{ij}+(-1)^{p(i)p(j)}q_i^{-a_{ij}}f_je_i', \quad \text{for} \ i,j\in I, \\
& f_if_j-(-1)^{p(i)p(j)}f_jf_i= e_i'e_j'-(-1)^{p(i)p(j)}e_j'e_i'= 0 \quad \text{for}\ i,j\in I\ \text{with}\ a_{ij}=0,  \\
& \sum_{a+b=1-a_{ij}}(-1)^{a+p(a;i,j)}{\begin{bmatrix} 1-a_{ij} \\ a \end{bmatrix}}_i^- f_i^{a}f_jf_i^{b}=0 \quad \text{for} \ i\in I^\re,j\in I\ \text{and}\ i\neq j,\\
& \sum_{a+b=1-a_{ij}}(-1)^{a+p(a;i,j)}{\begin{bmatrix} 1-a_{ij} \\ a \end{bmatrix}}_i^- e_i'^{a}e_j'e_i'^{b}=0 \quad \text{for} \ i\in I^\re,j\in I\ \text{and}\ i\neq j.\\
\end{aligned}
\end{equation*}
\end{definition}

\begin{lemma}\cite[Lemma 3.4.2, 3.4.3]{Kas91} 
{\it \ $U^-$ is a left $\mathscr B$-module, and we have a canonical isomorphism of $\mathscr B$-modules:
$$\mathscr B\big/ \sum_{i\in I}  \mathscr B e_i'\cong  U^-.$$}
\begin{proof}
We check the defining relations of $\mathscr B$ in the endomorphism ring of $U^-$. First assume that $i,j\in I$ with $a_{ij}=0$. By equation (\ref{B1}), for any $k\in I$, we have
$$\begin{aligned}
&  e_i'e_j'f_k=\delta_{jk}e_i'+(-1)^{p(i)p(j)}\delta_{ik}e_j'+(-1)^{(p(i)+p(j))p(k)}q_k^{-a_{ki}-a_{kj}}f_ke_i'e_j', \\
& e_j'e_i'f_k=(-1)^{p(i)p(j)}\delta_{jk}e_i'+\delta_{ik}e_j'+(-1)^{(p(i)+p(j))p(k)}q_k^{-a_{ki}-a_{kj}}f_ke_j'e_i'.
\end{aligned}$$
Let $S=e_i'e_j'-(-1)^{p(i)p(j)}e_j'e_i'$. Then for each $k\in I$, $$Sf_k=(-1)^{(p(i)+p(j))p(k)}q_k^{-a_{ki}-a_{kj}}f_kS,$$
which implies that $S$ acts as zero on $U^-$.

Next assume $i\in I^{\tx{re}}, j\in I$ with $j\neq i$, and let $m=1-a_{ij}$. For any $k\in I, n\in\N$, we obtain from (\ref{B1})
$$e_i'^nf_k=(-1)^{(n-1)p(i)}q_i^{1-n}[n]_i^-\delta_{ik}e_i'^{n-1}+(-1)^{np(i)p(k)}q_i^{-na_{ik}}f_ke_i'^n.$$
Therefore, for any $a,b\in\N$ with $a+b=m$, we have
\begin{equation*}\begin{aligned}
& e_i'^{a}e_j'e_i'^{b}f_k =(-1)^{(mp(i)+p(j))p(k)}q_k^{-ma_{ki}-a_{kj}}f_ke_i'^{a}e_j'e_i'^{b}\\
& \phantom{e_i'^{a}e_j'e_i'^{b}f_k =}+(-1)^{bp(i)p(j)}q_i^{b(m-1)}\delta_{jk}e_i'^m\\
& \phantom{e_i'^{a}e_j'e_i'^{b}f_k =}+(-1)^{(b-1)p(i)}q_i^{1-b}[b]_i^-\delta_{ik}e_i'^{a}e_j'e_i'^{b-1}\\
& \phantom{e_i'^{a}e_j'e_i'^{b}f_k =}+(-1)^{(m-1)p(i)+p(i)p(j)}q_i^{-b}[a]_i^-\delta_{ik}e_i'^{a-1}e_j'e_i'^{b}.
\end{aligned}
\end{equation*}

Since $$\sum_{a+b=m}(-1)^av^{b(m-1)}{\begin{bmatrix} m \\ a \end{bmatrix}}_v=0,$$
 by setting $v=(-1)^{p(i)/2}q_i$, we obtain
$$\sum_{a+b=m}(-1)^{a+\frac{b(b-1)}{2}p(i)}q_i^{b(m-1)}{\begin{bmatrix} m \\ a \end{bmatrix}}_i^-=0.$$
By combining this with the fact that $\frac{a(a-1)}{2}+\frac{b(b-1)}{2}\equiv \frac{m-1}{2} \pmod{2}$ when $i$ is odd, we get
\begin{equation}\label{B3}\sum_{a+b=m}(-1)^{a+p(a;i,j)+bp(i)p(j)}q_i^{b(m-1)}{\begin{bmatrix} m \\ a \end{bmatrix}}_i^-=0.\end{equation}
Note also that
\begin{equation}\label{B4}p(a;i,j)+(b-1)p(i)+p(a+1;i,j)+(m-1)p(i)+p(i)p(j)\equiv 0\pmod{2}.\end{equation}
Now, define $$S= \sum_{a+b=m}(-1)^{a+p(a;i,j)}{\begin{bmatrix} m \\ a \end{bmatrix}}_i^- e_i'^{a}e_j'e_i'^{b}.$$
Using the identities (\ref{B3}) and (\ref{B4}), we obtain
$$Sf_k=(-1)^{(mp(i)+p(j))p(k)}q_k^{-ma_{ki}-a_{kj}}f_kS,$$
which implies that $S$ acts as zero on $U^-$.

The second part of the proposition follows immediately from \cite[Lemma 3.4.3]{Kas91}.
\end{proof}
\end{lemma}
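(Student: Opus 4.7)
The proof has two parts: (i) construct a $\mathscr{B}$-module structure on $U^-$, and (ii) identify $\mathscr{B}/\sum_{i\in I}\mathscr{B} e_i'$ with $U^-$ as $\mathscr{B}$-modules. My plan is to take the operators already available on $U^-$---namely $f_i$ acting by left multiplication, and $e_i'$ extracted from the unique decomposition $e_i x - (-1)^{p(i)p(x)}x e_i = (K_i e_i''(x)-K_i^{-1}e_i'(x))/(q_i-q_i^{-1})$---and verify that they satisfy the defining relations of $\mathscr{B}$. The relation $e_i' f_j = \delta_{ij}+(-1)^{p(i)p(j)}q_i^{-a_{ij}}f_j e_i'$ is precisely (\ref{B1}), and the commutation and Serre relations among the $f_i$'s hold because $U^-\cong\YY$ via the triangular decomposition, so the only nontrivial tasks are the $e'$-commutation $e_i'e_j'=(-1)^{p(i)p(j)}e_j'e_i'$ when $a_{ij}=0$, and the $e'$-quantum Serre relation.

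The key technique is as follows. For any $S\in\text{End}(U^-)$ assembled out of $e_i'$'s, I will show that $Sf_k = c_k f_k S$ in $\text{End}(U^-)$ for some scalar $c_k$ independent of $S$. Since $e_i'(1)=0$ forces $S(1)=0$, and every element of $U^-$ is a $\Q(q)$-linear combination of words $f_{k_1}\cdots f_{k_r}\cdot 1$, induction on word length shows $S$ annihilates $U^-$. For the Serre expression $S=\sum_{a+b=m}(-1)^{a+p(a;i,j)}{\begin{bmatrix}m\\a\end{bmatrix}}_i^- e_i'^a e_j' e_i'^b$ with $m=1-a_{ij}$, I first iterate (\ref{B1}) to obtain a closed formula for $e_i'^n f_k$ and then apply it termwise. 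Four families of error terms appear (a diagonal term plus one each from $e_i'^a$, $e_j'$, and $e_i'^b$ interacting with $f_k$), and each family must cancel independently in the weighted sum.

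The main obstacle is the super-sign bookkeeping. I intend to reduce to the classical setting via $[n]_i^- = (-1)^{\frac{n-1}{2}p(i)}[n]_v$ with $v=(-1)^{p(i)/2}q_i$, so that the classical $v$-binomial identity
\[ \sum_{a+b=m}(-1)^a v^{b(m-1)}{\begin{bmatrix}m\\a\end{bmatrix}}_v = 0 \]
translates, after absorbing the extra super sign into $p(a;i,j)$, into the cancellation of the most delicate error term. The remaining two telescoping error families collapse using the parity congruence
\[ p(a;i,j)+(b-1)p(i)+p(a+1;i,j)+(m-1)p(i)+p(i)p(j)\equiv 0\pmod 2. \]
The case $a_{ij}=0$ is a one-line specialization of the same method applied to $S=e_i'e_j'-(-1)^{p(i)p(j)}e_j'e_i'$, where a direct calculation using (\ref{B1}) shows $Sf_k = (-1)^{(p(i)+p(j))p(k)}q_k^{-a_{ki}-a_{kj}}f_k S$.

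For part (ii), once $U^-$ is a $\mathscr{B}$-module the assignment $b\mapsto b\cdot 1$ defines a $\mathscr{B}$-module homomorphism $\Phi\colon\mathscr{B}\to U^-$, which is surjective because every $f_{k_1}\cdots f_{k_r}\in U^-$ is manifestly in its image and which factors through $\mathscr{B}/\sum_i\mathscr{B} e_i'$ since $e_i'\cdot 1 = 0$. For injectivity I invoke the straightening argument of \cite[Lemma 3.4.3]{Kas91}: repeatedly applying (\ref{B1}) to push each $e_i'$ to the right inside an arbitrary monomial of $\mathscr{B}$ yields a decomposition $\mathscr{B}=U^- + \sum_i\mathscr{B} e_i'$, and any $x\in U^-$ lying in the kernel of $\Phi$ satisfies $x = x\cdot 1 = 0$. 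Hence the induced map $\mathscr{B}/\sum_i\mathscr{B} e_i'\to U^-$ is an isomorphism of $\mathscr{B}$-modules.
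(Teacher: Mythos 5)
Your proposal is correct and follows essentially the same route as the paper: verify the $\mathscr{B}$-relations in $\mathrm{End}(U^-)$ by showing each Serre-type expression $S$ in the $e_i'$ satisfies $Sf_k=c_kf_kS$ (hence annihilates $U^-$ since $S(1)=0$), using the iterated form of the commutation relation, the classical $v$-binomial identity at $v=(-1)^{p(i)/2}q_i$, and the two parity congruences to cancel the error terms. Your explicit justification of why $Sf_k=c_kf_kS$ forces $S=0$ (induction on word length in the $f_k$) and your spelled-out surjectivity/injectivity argument for part (ii) are details the paper leaves implicit or delegates to \cite[Lemma 3.4.3]{Kas91}, but the substance is identical.
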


\begin{lemma}\cite[Lemma 3.4.7]{Kas91} \label{BI}
{\it \ Let $x\in U^-$ satisfy $e_i'(x)=0$ for all $i\in I$. Then $x\in\Q(q)$.}
\begin{proof}
We prove the lemma by induction on $\text{ht}{(\nu)} \geq 1$. The case for $\text{ht}{(\nu)} = 1$ is trivial. Now assume $\text{ht}{(\nu)} > 1$. Let $x \in U^-_\nu$ such that $e_i'(x) = 0$ for all $i \in I$.

For any $i, j \in I$, we have by equation (\ref{B2}),
 $$e_i'e_j''(x)=(-1)^{p(i)p(j)} q_i^{a_{ij}} e_j'' e_i'(x)=0.$$
By the induction hypothesis, this implies that $e_j''(x) = 0$ for all $j \in I$. Thus, for any $j \in I$, have the commutation relation 
$$e_jx=(-1)^{p(i)p(j)}xe_j.$$
 
 By \cite[Corollary 4.10]{BKM1998}, the irreducible highest weight module $V(\Lambda)$ with highest weight $\Lambda\in P^+$ and highest weight vector $v_\Lambda$ is given by
$$
 V(\Lambda) \cong U^- \bigg/
(\sum_{i\in I^{\text{re}}}U^- f_i^{\Lambda(h_i)+1}+
\sum_{i\in I^\im \ \text{with}\ \Lambda(h_i)=0}U^-f_i).
$$
Choose $\Lambda \in P^+$ such that $\Lambda \gg 0$. Then, there is an isomorphism $$U_\nu^-\xrightarrow{\sim} V(\Lambda)_{\Lambda-\nu},\ u\mapsto uv_\Lambda.$$ For all $j \in I$, we have $e_j x v_\Lambda = 0$, which implies that the submodule $U x v_\Lambda$ of $V(\Lambda)$ does not contain $v_\Lambda$. Since $V(\Lambda)$ is irreducible, this forces $x v_\Lambda = 0$, implying that $x = 0$.
\end{proof}
\end{lemma}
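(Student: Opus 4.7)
The plan is to induct on $\text{ht}(\nu)$, decomposing $x=\sum_{\nu}x_{\nu}$ into weight components $x_\nu\in U^-_\nu$ and noting that each $e_i'$ is homogeneous of degree $-i$, so the hypothesis descends to every component. The component $x_0\in U^-_0=\Q(q)$ already accounts for the conclusion, so the substantive statement is: if $\text{ht}(\nu)\geq 1$ and $x\in U^-_\nu$ satisfies $e_i'(x)=0$ for all $i$, then $x=0$. The base case $\text{ht}(\nu)=1$ is immediate, since $U^-_{i}=\Q(q)f_i$ and (\ref{B1}) gives $e_j'(f_i)=\delta_{ij}$, forcing the coefficient to vanish.

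For the inductive step I would first upgrade the hypothesis from $e_i'(x)=0$ to $e_j''(x)=0$ for every $j\in I$. The commutation identity $e_i'e_j''=(-1)^{p(i)p(j)}q_i^{a_{ij}}e_j''e_i'$ (the super-analog of Kashiwara's basic relation) yields $e_i'(e_j''(x))=0$ for every $i$; since $e_j''(x)\in U^-_{\nu-\alpha_j}$ has strictly smaller height, the inductive hypothesis forces $e_j''(x)=0$. Combining this with the defining bracket identity
$$e_ix-(-1)^{p(i)p(x)}xe_i=\frac{K_ie_i''(x)-K_i^{-1}e_i'(x)}{q_i-q_i^{-1}}$$
produces the super-commutation $e_ix=(-1)^{p(i)p(x)}xe_i$ inside $U$ for every $i\in I$.

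The last step is to realize $x$ inside a suitable irreducible highest-weight module. By \cite[Corollary 4.10]{BKM1998}, for $\Lambda\in P^+$ the module $V(\Lambda)$ is the quotient of $U^-$ by the left ideal generated by $f_i^{\Lambda(h_i)+1}$ for $i\in I^{\re}$ and by $f_i$ for those $i\in I^{\im}$ with $\Lambda(h_i)=0$. Choosing $\Lambda$ so that every $\Lambda(h_i)$ is sufficiently large compared with $\text{ht}(\nu)$, the map $U^-_{\nu}\to V(\Lambda)_{\Lambda-\nu}$, $u\mapsto uv_\Lambda$, becomes a $\Q(q)$-linear isomorphism. Since $e_jv_\Lambda=0$ and $e_jx=(-1)^{p(j)p(x)}xe_j$, we obtain $e_j(xv_\Lambda)=0$ for every $j$, so $Uxv_\Lambda$ is a submodule of $V(\Lambda)$ whose weights lie strictly below $\Lambda$; irreducibility of $V(\Lambda)$ then forces $xv_\Lambda=0$, hence $x=0$ by the isomorphism.

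The main obstacle will be controlling the super-signs in the commutation identities relating $e_i'$, $e_j''$, and $f_k$, and verifying that the presentation of $V(\Lambda)$ from \cite[Corollary 4.10]{BKM1998}—formulated in the Borcherds superalgebra setting with the $\Z_2$-grading on $I$—genuinely supplies a bijection $U^-_\nu\cong V(\Lambda)_{\Lambda-\nu}$ for $\Lambda\gg 0$. With those super-sign analogs of (\ref{B1}) and (\ref{B2}) in hand, the argument is a direct transcription of Kashiwara's classical proof in \cite[Lemma 3.4.7]{Kas91}.
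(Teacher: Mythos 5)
Your proposal follows essentially the same route as the paper's proof: the same induction on $\text{ht}(\nu)$, the same use of the commutation relation between $e_i'$ and $e_j''$ to upgrade the hypothesis to $e_j''(x)=0$, and the same realization of $x$ inside $V(\Lambda)$ for $\Lambda\gg 0$ via \cite[Corollary 4.10]{BKM1998}. Your version is slightly more careful in spots (the explicit weight decomposition, the base case via $e_j'(f_i)=\delta_{ij}$, and the correct sign $(-1)^{p(j)p(x)}$ in the super-commutation, where the paper has a typo), but the argument is the same.
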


\begin{theorem}\cite[Proposition 3.4.4, Corollary 3.4.8.]{Kas91} \label{BII}
{\it For any family $\{\kappa_i\}_{i\in I}$ of non-zero elements in $\Q(q)$, there exists a  nondegenerate symmetric bilinear form $\langle \ , \ \rangle: U^-\times U^-\rightarrow \Q(q)$ such that $\langle 1,1\rangle=1$ and
$$\langle f_ix,y\rangle=\kappa_i\langle x,e_i'y\rangle\ \ \tx{for all}\ i\in I.$$}
\begin{proof}
The assignment $f_i\mapsto \kappa_ie_i', e_i'\mapsto \kappa_i^{-1}f_i$ is a $\Q(q)$-algebra antiinvolution of $\mathscr B$. Thus $(U^-)^*$ is a left $\mathscr B$-module with the following action
$$(f_i\varphi)(y)=\kappa_i\varphi(e_i'y),\ (e_i'\varphi)(y)=\kappa_i^{-1}\varphi(f_iy)\ \ \tx{for}\ \varphi\in  (U^-)^*. $$

Let $\varphi_0\in (U^-)^*$ defined by $$\varphi_0(1)=1,\ \varphi_0(\sum_i f_iU^-)=0.$$
Thus  $e_i' \varphi_0 = 0$ for all $i \in I$. This gives rise to a $\mathscr{B}$-linear map
$$\mu: U^-\cong\mathscr B\big/ \sum_{i\in I}  \mathscr B e_i'\rightarrow (U^-)^*, \ 1\mapsto \varphi_0.$$

Now, define the bilinear form $\langle x, y\rangle := (\mu(x))(y)$. We have by the definition
$$\langle f_ix,y \rangle=\kappa_i\langle x,e_i'y\rangle,\quad \langle e_i'x,y\rangle=\kappa_i^{-1}\langle x,f_iy\rangle.$$
Such a bilinear form is unique. Since $\langle x,y\rangle':=\langle y,x\rangle$ satisfies the same property, we see that $\langle\ ,\ \rangle$ is symmetric.

To prove the non-degeneracy of $\langle x, y\rangle$, we use induction on $\text{ht}{(\nu)} \geq 1$. Let $x \in U^-_\nu$ such that $\langle x, U^-_{\nu}\rangle = 0$. Then, for all $i \in I$, we have $\langle e_i' x, U^-_{\nu - i} \rangle = 0$. By the induction hypothesis, this implies that $e_i' x = 0$ for all $i \in I$. By Lemma \ref{BI}, we conclude that $x = 0$.
\end{proof}
\end{theorem}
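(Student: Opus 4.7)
The plan is to construct the form by dualizing the natural action of the quantum Boson superalgebra $\mathscr B$ on $U^-$, following Kashiwara's strategy transplanted into the super setting. The first step is to verify that the assignment $\sigma\colon f_i\mapsto \kappa_i e_i',\ e_i'\mapsto \kappa_i^{-1}f_i$ extends to a $\Q(q)$-algebra anti-involution of $\mathscr B$. This is a direct check against the four families of defining relations: the cross-relation $e_i'f_j=\delta_{ij}+(-1)^{p(i)p(j)}q_i^{-a_{ij}}f_je_i'$ is preserved because the scalars $\kappa_i$ and $\kappa_i^{-1}$ compensate under reversal, while the Serre relations for the $f_i$ and for the $e_i'$ have identical form, so the total scalar $\kappa_i^{a+b}$ produced on each term by $\sigma$ is common and cancels.

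Using $\sigma$, I would equip $(U^-)^*$ with the left $\mathscr B$-module structure $(b\cdot\varphi)(y)=\varphi(\sigma(b)y)$; in particular $(f_i\varphi)(y)=\kappa_i\varphi(e_i'y)$ and $(e_i'\varphi)(y)=\kappa_i^{-1}\varphi(f_iy)$. Let $\varphi_0\in (U^-)^*$ be defined by $\varphi_0(1)=1$ and $\varphi_0\bigl(\sum_{i\in I}f_iU^-\bigr)=0$; by construction $e_i'\varphi_0=0$ for every $i\in I$, so $\varphi_0$ is annihilated by $\sum_{i\in I}\mathscr Be_i'$. The $\mathscr B$-module isomorphism $U^-\cong \mathscr B\big/\sum_{i\in I}\mathscr Be_i'$ established earlier then provides a unique $\mathscr B$-linear map $\mu\colon U^-\to(U^-)^*$ with $\mu(1)=\varphi_0$. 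Setting $\langle x,y\rangle:=(\mu(x))(y)$ gives $\langle 1,1\rangle=1$ immediately, and $\mathscr B$-linearity of $\mu$ yields both $\langle f_ix,y\rangle=\kappa_i\langle x,e_i'y\rangle$ and the companion identity $\langle e_i'x,y\rangle=\kappa_i^{-1}\langle x,f_iy\rangle$.

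Symmetry will follow by a uniqueness argument: any form $C$ satisfying $C(1,1)=1$ and $C(f_ix,y)=\kappa_i C(x,e_i'y)$ is completely determined, since iterating the relation reduces $C(f_{i_1}\cdots f_{i_k}\cdot 1,\,y)$ to $\kappa_{i_1}\cdots\kappa_{i_k}\varphi_0(e_{i_1}'\cdots e_{i_k}'y)$. Define $\langle x,y\rangle':=\langle y,x\rangle$; applying the companion identity with roles swapped yields $\langle y,f_ix\rangle=\kappa_i\langle e_i'y,x\rangle$, hence
$$\langle f_ix,y\rangle'=\langle y,f_ix\rangle=\kappa_i\langle e_i'y,x\rangle=\kappa_i\langle x,e_i'y\rangle',$$
so $\langle\ ,\ \rangle'$ has the same defining properties and must agree with $\langle\ ,\ \rangle$.

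The principal obstacle is nondegeneracy, which is precisely where Lemma \ref{BI} becomes indispensable. I would argue by induction on $\tx{ht}(\nu)$, with $\nu=0$ immediate since $\langle 1,1\rangle=1$. Suppose nondegeneracy holds in all strictly smaller heights, and let $x\in U^-_\nu$ satisfy $\langle x,U^-_\nu\rangle=0$. For each $i\in I$ and each $z\in U^-_{\nu-i}$, we have $f_iz\in U^-_\nu$, hence (invoking symmetry)
$$0=\langle f_iz,x\rangle=\kappa_i\langle z,e_i'x\rangle,$$
and $\kappa_i\neq 0$ forces $\langle z,e_i'x\rangle=0$ for all $z\in U^-_{\nu-i}$. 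The inductive hypothesis then gives $e_i'x=0$ for every $i\in I$, and Lemma \ref{BI} forces $x\in\Q(q)$; since $\tx{ht}(\nu)\geq 1$, we conclude $x=0$. All of the nontrivial algebraic input is absorbed into Lemma \ref{BI}, whose proof in turn passes through the irreducible highest weight modules $V(\Lambda)$ for $\Lambda\gg 0$.
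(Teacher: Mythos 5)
Your proposal is correct and follows essentially the same route as the paper's proof: the anti-involution $f_i\mapsto\kappa_ie_i'$, $e_i'\mapsto\kappa_i^{-1}f_i$ of $\mathscr B$, the dual module structure on $(U^-)^*$ with the vector $\varphi_0$ and the $\mathscr B$-linear map $\mu$ from $\mathscr B\big/\sum_i\mathscr Be_i'\cong U^-$, symmetry via uniqueness of the form, and nondegeneracy by induction on height reducing to Lemma \ref{BI}. The only cosmetic difference is that you route the inductive step through symmetry while the paper uses the companion identity $\langle e_i'x,y\rangle=\kappa_i^{-1}\langle x,f_iy\rangle$ directly; these are equivalent.
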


Let $\mathscr Y$ be the free $\Q(q)$-algebra generated by $\vartheta_i$ $(i\in I)$ with $|\vartheta_i|=i$ and $p(\vartheta_i)=p(i)$. We define a comultiplication $\rho_-:\mathscr Y\rightarrow \mathscr Y\otimes \mathscr Y$  by $\rho_-{(\vartheta_i)}=\vartheta_i\otimes 1+1\otimes \vartheta_i$ ($i\in I$). Here $\mathscr Y\otimes \mathscr Y$ is endowed with the twisted multiplication
$$(x_1\otimes x_2)(y_1\otimes y_2)=(-1)^{p(x_2)p(y_1)}q^{-|x_2|\cdot |y_1|}x_1y_1\otimes x_2y_2.$$ As in \cite[1.2.3]{Lus}, there is a   symmetric bilinear form $\{ \ , \ \}_-:\mathscr Y\times \mathscr Y\rightarrow \Q(q)$ satisfying
\begin{itemize}
\item[(i)] $\{x, y\}_- =0$ if $|x| \neq |y|$,
\item[(ii)] $\{1,1\}_-= 1$,
\item[(iii)] $\{\vartheta_i, \vartheta_i\}_- = \kappa_i$ for all $i\in I$,
\item[(iv)] $\{x, yz\}_-= \{\rho_-(x), y \otimes z\}_-$  for $x,y,z
\in \mathscr Y$.
\end{itemize}

Let $i\in I$. For $x\in \mathscr Y$, we write $\rho_-(x)$ into
$$\rho_-(x)=\vartheta_i\otimes \rho_-^i(x) + \ \text{terms of bidegree not in}\ i\times \N[I].$$
Then for any $x,y\in \mathscr Y$, we have by the definition
\begin{equation}\label{B5}\rho_-^i(xy) = \rho_-^i(x) + (-1)^{p(i)p(x)}q^{-i\cdot |x|}x\rho_-^i(y),\end{equation}
\begin{equation}\label{B6}\{\vartheta_ix, y\}_-=\kappa_i\{x,\rho_-^i(y)\}_-.\end{equation}

Following \cite[1.4.5]{Lus}, one can show that $\tx{rad}\{ \ , \ \}_-$ contains
$$
\begin{aligned}
& \sum_{a+b=1-a_{ij}}(-1)^{a+p(a;i,j)}\vartheta_i^{(a)}\vartheta_j\vartheta_i^{(b)} \quad \text{for} \ i\in I^\re,j\in I\ \text{and}\ i\neq j,\\
& \vartheta_i\vartheta_j-(-1)^{p(i)p(j)}\vartheta_j\vartheta_i \quad \text{for}\ i,j\in I\ \text{with}\ i\cdot j=0.
\end{aligned}
$$
Thus, we have a surjective homomorphism
$$\Phi: U^-\twoheadrightarrow \mathscr Y/\tx{rad}\{ \ , \ \}_-,\ f_i\mapsto \vartheta_i\ \ \tx{for}\ i\in I.$$

Note that $\rho_-$ and $\rho_-^i$ are well-defined on $\mathscr Y/\tx{rad}\{ \ , \ \}_-$. By (\ref{B5}), we see that
$$\Phi(e_i'x)=\rho_-^i\Phi(x) \ \ \tx{for all}\ i\in I,$$
and then by (\ref{B6}), we have
$$\{\Phi(x), \Phi(y)\}_-=\langle x,y\rangle \ \ \tx{for}\ x,y\in U^-.$$
Therefore, $\Phi$ is injective by the non-degeneracy of $\langle x, y\rangle$ (see Theorem \ref{BII}). 

We conclude that
$\Phi$ is an isomorphism, and so $\mathbf Y\cong \mathscr Y/\tx{rad}\{ \ , \ \}_-$. 
Thus, $\rho_-$ defines a comultiplication on $\mathbf Y$.

\vskip 6mm
\section*{Acknowledgement}
\vskip 2mm

The author would like to thank Bolun Tong for his valuable sugggestions on this thesis.
\vskip 10mm

\bibliographystyle{amsplain}

\begin{thebibliography}{99}

\bibitem{BKM1998}
G. Benkart, S.-J. Kang and D. Melville, \emph{Quantized enveloping algebras for
 Borcherds superalgebras}, Trans. Amer. Math. Soc. \textbf{350}(8) (1998), 3297-3319.
 \bibitem{E2013}
A. Ellis, \emph{The odd Littlewood-Richardson rule}, J. Algebraic Combin. \textbf{37}(4) (2013), 777-799.
 \bibitem{EK2012}
A. Ellis and M. Khovanov, \emph{The Hopf algebra of odd symmetric functions}, Adv. Math. \textbf{231}(2) (2012), 965-999.
\bibitem{EKL2014}
A. Ellis,  M. Khovanov and A. Lauda, \emph{The odd nilHecke algebra and its diagrammatics}, Int. Math. Res. Not. (2014), 991-1062.
\bibitem{LR2014}
A. Lauda and H. Russell, \emph{Oddification of the cohomology of type $A$ Springer varieties}, Int. Math. Res. Not. (2014), 4822–4854.
\bibitem{FKKT2022}
Z. Fan, S.-J. Kang, Y. R. Kim and B. Tong, {\em Global bases for quantum Borcherds-Bozec algebras}, Math. Z.  {\bf 301} (2022), 3727-3753.
\bibitem{HW2015}
D. Hill and W. Wang, \emph{Categorification of quantum Kac-Moody superalgebras}, Trans. Amer. Math. Soc. \textbf{367} (2015), 1183-1216.
 \bibitem{Kas91}
M. Kashiwara, \emph{On crystal bases of the $q$-analogue of universal enveloping algebras},  Duke Math. J. {\bf 63}(2) (1991), 465-516.
\bibitem{KK2012}S.-J.~Kang and M.~Kashiwara,
{\em Categorification of highest weight modules via Khovanov-Lauda-Rouquier algebras},  Invent. Math. \textbf{190}(3) (2012), 699-742.
\bibitem{KKO2013}
S.-J. Kang, M. Kashiwara and S-j. Oh, \emph{Supercategorification of quantum Kac-Moody algebras}, Adv. Math. \textbf{242} (2013), 116-162.
\bibitem{KKO2014}
S.-J. Kang, M. Kashiwara and S-j. Oh, \emph{Supercategorification of quantum Kac-Moody algebras II}, Adv. Math. \textbf{265} (2014), 169-240.
\bibitem{KKT2016}
  S.-J. Kang, M. Kashiwara and S. Tsuchioka, \emph{Quiver Hecke superalgebras}, J. Reine Angew. Math. \textbf{711} (2016), 1-54.
\bibitem{K2005}
A. Kleshchev, \emph{Linear and projective representations of symmetric groups}, volume 163 of Cambridge Tracts in Mathematics. Cambridge U. Press, 2005.
 \bibitem{K2025}
A. Kleshchev, \emph{Imaginary Schur-Weyl duality for quiver Heckesuperalgebras}, 	arXiv:2411.02735.
\bibitem{KL2009}
M. Khovanov and A. Lauda, \emph{A diagrammatic approach to categorification of quantum groups $\rm{I}$}, Represent. Theory \textbf{13}(14) (2009), 309-347.
\bibitem{KL2011}
M. Khovanov and A. Lauda, \emph{A diagrammatic approach to categorification of quantum groups $\rm{II}$}, Trans. Amer. Math. Soc. \textbf{363}(5) (2011), 2685-2700.
 \bibitem{KL2022}
 A. Kleshchev and M. Livesey, \emph{RoCK blocks for double covers of symmetric groups and quiver Hecke superalgebras}, arXiv:2201.06870.
\bibitem{Le2023}
 F. Lenzen, \emph{Clifford-symmetric polynomials}, Communications in Algebra, \textbf{51}(9) (2023), 3981-4011.
\bibitem{Lus}
G. Lusztig, \emph{Introduction to quantum groups}, Springer, New York, 2010.
\bibitem{NV2004}
C. N\v{a}st\v{a}sescu and F. Van Oystaeyen, \emph{Methods of graded rings}, Springer-Verlag, Berlin, 2004.
\bibitem{Rou}
R. Rouquier, \emph{$2$-Kac-Moody algebras}, arXiv:0812.5023.
\bibitem{TW2023}
B. Tong and W. Wu, \emph{Quiver Hecke algebras for Borcherds-Cartan datum}, J. Algebra \textbf{630} (2023), 38-55.
\bibitem{TW2025}
B. Tong and W. Wu, \emph{Quiver Hecke algebras for Borcherds-Cartan datum II}, J. Pure Appl. Algebra \textbf{229}(6) (2025), 107977.
\end{thebibliography}

\end{document}